\documentclass[final,twosided]{siamltex}
\usepackage[pdftex]{hyperref}
\usepackage{amsmath,amsfonts}

\oddsidemargin 14mm
 \evensidemargin 14mm
 \topmargin  10 pt
 \textwidth=5.2 true in

\newcommand{\mbe}{\mathbb{E}}

\newcommand{\mbp}{\mathbb{P}}

\newcommand{\cN}{{\mathcal{N}}}
\newcommand{\opA}{A}
\newcommand{\nat}{{\mathbb N}}
\newcommand{\real}{\mathbb R}

\renewcommand{\rho}{{\varrho}}
\newcommand{\rhon}{\rho_{\cN}}

\newcommand{\lr}[1]{\left(#1\right)}
\newcommand{\abs}[1]{\left\vert #1 \right\vert}
\newcommand{\norm}[2]{\Vert #1 \Vert _{#2}}
\newcommand{\set}[1]{\left\{#1\right\}}
\newcommand{\scalar}[2]{\langle #1,#2\rangle}
\newcommand{\brac}[1]{\left[#1\right]}
\newcommand{\ra}{r_\alpha}
\newcommand{\sa}{s_\alpha}
\newcommand{\ga}{g_\alpha}
\newcommand{\xa}{x_\alpha}
\newcommand{\xn}{x_{0}}

\newcommand{\rb}{r_\beta}
\newcommand{\raa}{r_{\alpha_\ast}}
\newcommand{\saa}{s_{\alpha_\ast}}
\newcommand{\sbeta}{s_\beta}
\newcommand{\xb}{x_\beta}
\newcommand{\xad}{x_\alpha^\delta}
\newcommand{\xast}{x^{\dagger}}
\newcommand{\aast}{\alpha_\ast}

\newcommand{\yd}{y^\delta}
\newcommand{\zd}{z^\delta}

\def\a{\alpha}

\newcommand{\tr}[1]{\mathrm{Tr}\brac{#1}}
\newcommand{\e}[1]{\mbe\brac{#1}}

\newcommand{\prob}[1]{\mbp\brac{#1}}


\newtheorem{thm}{Theorem}[section]

\newtheorem{cor}{Corollary}[section]
\newtheorem{lem}{Lemma}[section]
\newtheorem{rem}{Remark}[section]
\newtheorem{de}{Definition}[section]
\newtheorem{ass}{Assumption}[section]
\newtheorem{example}{Example}[section]

\title{Oracle inequality for
 a statistical Raus--Gfrerer type rule }
\author{Qinian Jin\thanks{Mathematical Sciences Institute, Bldg 27, Australian National
University, ACT 0200 Australia} ({\tt qinian.jin@anu.edu.au})
\and Peter Math\'e\thanks{Weierstra{\ss} Institute for Applied Analysis and
  Stochastics, Mohrenstra{\ss}e 39, 10117 Berlin,  Germany} ({\tt peter.mathe@wias-berlin.de})}

\begin{document}

\maketitle
\begin{abstract}
  The authors study statistical linear inverse problems in Hilbert
  spaces. Approximate solutions are sought within a class of linear
  one-parameter regularization schemes, and the parameter choice is
  crucial to control the root mean squared error. Here a variant of
  the Raus--Gfrerer rule is analyzed, and it is shown that this
  parameter choice gives rise to error bounds in terms of oracle
  inequalities, which in turn provide order optimal error bounds (up
  to logarithmic factors). These bounds can only be
  established for solutions which obey a certain self-similarity
  structure. The proof of the main result relies on some auxiliary
  error analysis for linear inverse problems under general noise
  assumptions, and this may be interesting in its own.
\end{abstract}
\begin{keywords}
  statistical inverse problem, Raus--Gfrerer parameter choice,
oracle inequality
\end{keywords}
\begin{AMS}
  47A52, secondary: 65F22, 65C60
\end{AMS}
\pagestyle{myheadings}
\thispagestyle{plain}
\markboth{Q. JIN AND P. MATH\'E}{RG RULE FOR STATISTICAL INVERSE PROBLEMS}

\section{Introduction}

\label{sec:intro}

In this study we introduce a new parameter choice strategy for
statistical linear
inverse problems in Hilbert spaces. We consider the
following linear equation
\begin{equation}
  \label{eq:model}
  \yd = T \xast + \delta \xi,
\end{equation}
where $T\colon X \to Y$ is a compact linear operator between
Hilbert spaces $X$ and $Y$, the parameter $\delta>0$ denotes the noise
level, and $\xi$ stands for the additive noise, to be specified later
as Gaussian white noise,  which leads to observations $\yd$. This is a
standard model considered in statistical inverse problems. By using
the singular system $\{s_j, u_j, v_j\}$ of $T$ to write $T x =
\sum_j s_{j}\scalar{x}{u_{j}}v_{j},\ x\in X$, the above model~(\ref{eq:model}) is
seen to be equivalent to the sequence space model
$$
y_{j}^{\delta} = x_{j} + \delta \xi_{j},\quad j=1,2,\dots,
$$
with observations~$y_{j}^{\delta}= \scalar{\yd}{v_{j}}/s_{j}$, the
noise~$\xi_{j}$ is centered Gaussian with
variance~$\delta^{2}/s_{j}^{2}$. The unknown solution~$x$ has coefficients~$x_{j}$
with respect to the basis~$u_{j},\ j=1,2,\dots$
This model is frequently analyzed, and we mention the recent
survey~\cite{MR2421941}. In particular the minimax error is clearly
understood if the solution sequence~$x_{j},\ j=1,2,\dots$ belongs to
some Sobolev type ball. In particular, a series estimator $\hat x_{k}(\yd) =
\sum_{j=1}^{k} c_{j} \yd_{j}$ (with appropriately chosen weights
$c_{j}$) is (almost) optimal.

The important question is how to choose the truncation level
(parameter, model)~$k$ based on the given data and the noise level~$\delta$.
Parameter choice in statistical inverse problems,
called \emph{model selection} in this field, is an important issue,
and we refer to~\cite{MR2421941} for a survey on this. Only recently, the
\emph{discrepancy principle}, which is the most prominent parameter choice in classical
regularization theory, has been analyzed within the statistical context in~\cite{B/M2010}.
Here, for any estimator $\hat x= \hat x(\yd)$ it requires to
achieve that $\norm{T \hat x-\yd}{}\asymp \delta$.  Since the white noise $\xi$ is
not an element in $Y$, the discrepancy $\|T \hat{x} -y^\delta\|$ is not well-defined.
Therefore, for statistical inverse problems, the traditional discrepancy principle can not
be applied directly.

In order to make the discrepancy principle applicable to statistical inverse problems,
we may consider, instead, the symmetrized equation with $A:= T^{\ast}T\geq 0$ and
$\zeta:=T^{\ast} \xi$, as
\begin{equation}
  \zd = T^{\ast} \yd = \opA \xast + \delta T^{\ast} \xi = \opA \xast + \delta \zeta.
\label{eq:model-symm}
\end{equation}
Then, if the operator~$\opA$ has finite trace, the new misfit $\norm{\opA \hat x -
  \zd}{}$ is almost surely finite, and it is tempting to require that
\begin{equation}
  \label{eq:DP-plain}
  \norm{\opA \hat x(\zd) - \zd}{}\asymp \delta,
\end{equation}
which gives the discrepancy
principle for the symmetrized equation. However, as was pointed out
in~\cite{B/M2010}, this plain use of the discrepancy principle leads only
to suboptimal performance. Instead, the misfit~$\opA \hat x(\zd)
- \zd$ should be weighted, and if done accordingly, this can yield
optimal rates of reconstruction. To be specific we consider the family of reconstructions
$$
\xad = \lr{\alpha I +
  \opA}^{-1}T^{\ast}\yd,\qquad \alpha>0,
$$
via Tikhonov regularization. The authors in \cite{B/M2010} studied the
\emph{modified  discrepancy principle}
 \begin{equation}
   \label{eq:mdp}
 \norm{\lr{\lambda I + \opA}^{ -1/2}(A \xad
   - \zd)}{}\asymp \delta.
 \end{equation}
It is shown that an appropriate choice of $\lambda>0$ yields order
optimal reconstruction in many cases. However, the choice of $\lambda$ requires
the unknown smoothness of solution which makes the discrepancy principle into
an \emph{a priori} rule.

Instead, the authors in \cite{Lu/Mathe2012} considered the \emph{varying discrepancy principle}
\begin{equation}
  \label{eq:vdp}
  \norm{\lr{\alpha I + \opA}^{-1/2}(A \xad - \zd)}{}\asymp \delta
\end{equation}
by relating $\lambda=\alpha$ in (\ref{eq:mdp}) to make the principle into an
\emph{a posteriori} one, and thus the weight depends on the parameter
$\alpha$ under consideration. The main achievement in \cite{Lu/Mathe2012} is
that this new principle may yield optimal order reconstruction (up to a logarithmic
factor). However, it became transparent that such result holds only
for solutions $\xast$ which satisfy certain self-similarity
properties. This has an intuitive explanation: For large values of
$\alpha$, and this is where the discrepancy principle starts with, the
misfit is dominated by the large singular numbers~$s_{j}$. However,
the approximation order is determined by all of the spectrum.

The varying discrepancy principle has another drawback.
The regularization scheme, which is used to
determine the candidate solutions $\xad$ must have higher
qualification than given by the underlying smoothness in terms of
general source conditions. For instance, if we use Tikhonov
regularization, whose qualification is known
to be 1, see \cite{EHN96}, then the varying discrepancy principle gives order optimal
reconstruction only for smoothness `up to 1/2'.  This effect, which
is inherent in the discrepancy principle in classical regularization
context, is called \emph{early saturation}, and it can be overcome
by turning from the discrepancy principle to the so-called
\emph{Raus--Gfrerer rule} (RG-rule).

As Raus and Gfrerer proposed, instead of the discrepancy
from~(\ref{eq:DP-plain}) an additional weight should be used,
which results in the RG-rule
\begin{equation}
  \label{eq:RG-plain}
  \norm{\lr{\alpha I + A}^{-1}(\opA \xad - \zd)}{}\asymp \delta.
\end{equation}
This is the starting point for the present study, the application of
the RG-rule within the statistical context. It will be shown that an
appropriate use of the RG-rule will yield order optimal results
without the effect of early saturation. Actually, we will propose a
statistical version of RG rule and establish some \emph{oracle inequalities},
provided that the solution obeys some self-similarity. Oracle inequalities
are widely used in statistics, see \cite{MR2421941}. An oracle inequality
guarantees that the estimator has a risk of the same order as that of the oracle.
The oracle bound in particular implies that Tikhonov regularization
can achieve order optimal reconstruction up to order $1$.

This paper is organized as follows. We first precisely introduce the context, and then
we state the main result with some discussion in Section \ref{sec:setup}. The proof of
the main result will rely on preliminary results within the classical
(deterministic noise) setting given in Section \ref{sec:aux}, however, under \emph{general noise
assumptions}. The results in this context may be interesting
in their own.  Finally, the proof of the main result is given in
Section \ref{sec:gwn}.

\section{Setup and main result}

\label{sec:setup}

We shall use the same setup as in \cite{B/M2010,Lu/Mathe2012}. However, the parameter
choice will be different.

\subsection{Assumptions}
\label{sec:ass}

We start with the description of the noise.
We will mimic the notion of \emph{Gaussian white noise} to the present case. Let
$\lr{\Omega,\mathcal F, {\mathbb P}}$ be a (complete) probability
space, and let $\mathbb E$ be the expectation with respect to ${\mathbb P}$.

\begin{ass}[Gaussian white noise]\label{ass:GWN}
{\it  The noise $\xi = \lr{\xi(y),\ y\in Y}$ in~(\ref{eq:model})
is a stochastic process, defined on $\lr{\Omega,\mathcal F, {\mathbb P}}$ with
the properties that
\begin{enumerate}
\item for each $y\in Y$ the random number~$ \xi(y)\in
  L_{2}\lr{\Omega,\mathcal F, {\mathbb P}}$ is a centered Gaussian random
  variable, and
\item  for all $y,y^{\prime}\in Y$ the covariance structure
  is~$\e{\xi(y)\xi(y^{\prime})} = \scalar{y}{y^{\prime}}$.
\end{enumerate}
}
\end{ass}

As a consequence, the mapping $y\to \xi(y)$ is linear, and we shall
thus write $\xi(y) = \scalar{\xi}{y}$, we refer to~\cite{MR543837} for
details.

The related Gaussian process~$\zeta:= T^*\xi$ has
covariance $\e{\scalar{\zeta}{w}\scalar{\zeta}{w^{\prime}}} = \scalar{w}{\opA
w^{\prime}}$, $w,w^{\prime}\in X$ with the operator $\opA:= T^{ \ast} T$.

\begin{ass}\label{ass:power-type}
{\it
The operator $A$ has finite trace $\tr A<\infty$.
}
\end{ass}

Under Assumption \ref{ass:power-type}, Sazonov's Theorem,
cf. \cite{MR543837}, asserts that the element $\zeta:=T^* \xi$ is a Guassian random element in
$X$ (almost surely). Therefore the equation
\begin{equation}
  \label{eq:main-model}
\zd = \opA \xast + \delta \zeta
\end{equation}
is a well defined linear equation in $X$ (almost surely). This will be
our main model from now on.

Moreover, Assumption \ref{ass:power-type} implies that the following function
is well defined; for further properties we refer to \cite{B/M2010}.

\begin{de}[effective dimension]\label{def:cn}
The function $\cN(\lambda)$ defined as
\begin{equation}
\label{ass:opreg}
\cN (\lambda) = \cN_{\opA} (\lambda):= \tr{(\opA+\lambda I)^{-1}\opA}
, \, \lambda>0,
\end{equation}
is called effective dimension of the operator $\opA$ under white noise.
\end{de}

Along with the effective dimension, as in~\cite{Lu/Mathe2012} we introduce
the decreasing function~$\rhon(t)$ given by
\begin{equation}
  \label{eq:rhon}
  \rhon(t):= 1/\sqrt{t\cN(t)},\quad t>0
\end{equation}
and its companion
\begin{equation}
  \label{eq:thetaf}
  \Theta_{\rhon}(t) := t \rhon(t),\quad t>0.
\end{equation}
The latter function is continuous and strictly increasing, hence its
inverse is well-defined.

We recall the notion of linear regularization, see
e.g.~\cite[Definition 2.2]{MR2318806}.
\begin{de}[linear regularization]\label{de:regularization}
  A family of functions
$$
\ga\colon (0,\norm{\opA}{}]\mapsto \real,\
  0<\alpha \leq \norm{\opA}{},
$$
is called
  regularization if they are piecewise continuous in~$\alpha$ and the
  following properties hold:
  \begin{enumerate}
  \item\label{it:convergence} For each~$0< t \leq \norm{\opA}{}$ we
    have that~$\abs{\ra(t)}\to 0$ as~$\alpha\to 0$;

  \item \label{it:gamma1} There is a constant~$\gamma_{1}$ such
    that~$\sup_{0\le t\le \|A\|} \abs{\ra(t)}\leq \gamma_{1}$ for all~$0<\alpha \leq \norm{\opA}{}$;

  \item\label{it:gammaast} There is a constant~$\gamma_{\ast}\geq 1$ such
    that~$\sup_{0\le t \leq
      \norm{\opA}{}} \alpha \abs{\ga(t)}\leq \gamma_{\ast}$ for all
    ${0< \alpha < \infty}$,
  \end{enumerate}
where~$\ra(t) := 1 - t \ga(t),\ 0\le t \leq
  \norm{\opA}{},$ denotes the residual function.
\end{de}

We further restrict the analysis to regularization schemes which are
monotone
\begin{align}  \label{eq:ra-monotone}
  \ra(t) &\leq \rb(t),\qquad \text{for } 0 < \alpha \leq \beta
\end{align}
and
\begin{align}\label{eq:ra-0-1}
 0 &\leq \ra(t) \leq 1, \qquad \mbox{for } \a>0.
\end{align}
Hence Item~(\ref{it:gamma1}) in Definition \ref{de:regularization} holds with $\gamma_{1}=1$,
and also $0 \leq t \ga(t)\leq 1$.
We also recall the following fact from~\cite[Lemma 2.3]{MR2684361}: For
$0 < \alpha \leq \beta $ there holds
\begin{equation}
  \label{eq:lem23}
  0 \leq \rb(t) - \ra(t) \leq (1 + \gamma_{\ast})\frac{t}{\alpha+t} \rb(t).
\end{equation}
Indeed, it follows from (\ref{eq:ra-monotone}) and (\ref{eq:ra-0-1}) that
$$
  0 \leq \rb(t) - \ra(t) \leq (1- r_\alpha (t))\rb(t)
= t\ga(t)\rb(t).
$$
The result now follows from the observation that
$(t + \alpha)\ga(t) \leq 1 + \gamma_{\ast}$.

Having chosen an initial guess $x_0\in X$ and a regularization~$\ga$
we construct the approximate solutions
\begin{displaymath}
  \xad  := x_{0} - \ga(\opA)( \opA
x_{0} -\zd),\quad\mbox{and}  \quad \xa := x_{0} - \ga(\opA)(\opA x_{0} -z);
\end{displaymath}
for the noise free case  we use $z:= \opA \xast$.
Recall that the element $\zeta=T^{\ast}\xi$ is a
Gaussian random element in $X$ (almost surely). Therefore, we will use the root
mean squared error at a solution instance $\xast$, given as
\begin{equation}
  \label{eq:rms-error}
\lr{\e{\norm{ \xast - \xad}{}^{2}}}^{1/2},\quad \alpha,\delta>0.
\end{equation}

\subsection{Parameter choice}
\label{sec:VRG}

For the stopping criterion we will consider the following
setup. Having chosen a constant $0< q <1$ we select the parameter
$\alpha$ from the geometric family
\begin{equation}
  \label{eq:Delta}
  \Delta_{q} := \set{\alpha_{k},\ \alpha_{k}:= q^{k}\alpha_{0},\quad k=0,1,2,\dots}.
\end{equation}
For the statistical RG-rule  we introduce the family of functions
\begin{equation}
  \label{eq:sa}
 \sa(t) = \frac{\alpha}{t+\alpha},\quad t,\alpha>0,
\end{equation}
which are the residual functions from Tikhonov regularization.

\begin{de}[statistical RG-rule]\label{de:VRG}
Given $\tau > 1$, $\eta>0$ and $\kappa\geq 0$,  let $\a_{RG}$ be the largest
  parameter~$\alpha\in \Delta_{q}$ for which either
  \begin{align}
\norm{\sa(\opA)(\opA \xad - \zd)}{} & \leq \tau(1+\kappa) \frac
\delta{\rhon(\alpha)},\label{ali:regular}
\end{align}
or
\begin{align}
  \Theta_{\rhon}(\alpha) \leq \eta (1+\kappa)\delta. \label{ali:emergency}
  \end{align}
\end{de}

We will call the criteria (\ref{ali:regular}) and (\ref{ali:emergency}) the regular stop
and emergency stop, respectively. Notice that the regular stop in Definition \ref{de:VRG}
can be viewed as the Raus-Gfrerer rule applied to Lavrent'iev type
regularization of the symmetrized equation~(\ref{eq:main-model}).

\subsection{Restricting the solution set}
\label{sec:solset}

One important observation in the subsequent analysis, in particular
in Section~\ref{sec:aux}, will be that the RG-rule as
introduced in \S\ref{sec:VRG} may fail for statistical problems (and
also for bounded deterministic general noise), if the solution
element~$\xast$ has \emph{abnormal} spectral behavior relative to the
operator~$A$. Therefore, we shall need the following restriction for
the solution $\xast$.
 To describe this we use the spectral resolution~$\lr{E_{t}}_{0 \leq t \leq
\norm{\opA}{}}$ of the (compact) non-negative self-adjoint operator~$\opA$.

\begin{ass}\label{ass:KN}
{\it
There exist $c_1>1$, $0<c_2< 1$ and $0<t_0<\|A\|$ such that
$$
\int_0^\alpha d \|E_t (x^\dag-x_0)\|^2 \le c_1^{2} \int_{c_2 \alpha}^\infty r_\alpha^2(t)
\, d \|E_t(x^\dag-x_0)\|^2
$$
for all $0<\alpha\le t_0$.
}
\end{ass}

The inequality in Assumption \ref{ass:KN} with $c_2=1$ was introduced
in \cite{0266-5611-24-5-055005} as a generalization of a restricted form on
$x^\dag-x_0$ in \cite{MR2395145} for the (iterated) Tikhonov regularization.

\begin{example}
{\rm
 For the $n$-times iterated Tikhonov regularization, we have $\ra(t) =
 \alpha^{n}/(t+\alpha)^{n}$. It is easy to see that
$$
\abs{\ra(t)} \geq c_3 \lr{\frac \alpha t }^{n} \quad \mbox{ for }
t\geq c_2 \alpha,
$$
with $c_3:= (c_2/(1+c_2))^n$. Therefore, in this case, Assumption~\ref{ass:KN} is
equivalent to
$$
\int_{0}^{\alpha}  d \|E_t(\xast - \xn)\|^2 \le c_4 \alpha^{2 n} \int_{c_2\alpha}^{\infty} t^{- 2 n} \;
d \|E_{t}(\xast - \xn)\|^{2},\quad  0< \alpha \leq t_0.
$$
This, with $c_2=1$,  is the condition used in~\cite{MR2395145}.
}
\end{example}

\begin{example}
{\rm
For truncated singular value decomposition method we have
$$
g_\alpha(t) =\left\{\begin{array}{lll}
1/t, & t\ge \alpha,\\
0, & t<\alpha
\end{array}\right.
\quad \mbox{and} \quad
r_\alpha(t) = \left\{ \begin{array}{lll}
0, & t\ge \alpha,\\
1, & t<\alpha.
\end{array}\right.
$$
Thus Assumption \ref{ass:KN} becomes
$$
\int_0^\a d \|E_t (x^\dag-x_0)\|^2 \le c_1 \int_{c_2\a}^\a d \|E_t(x^\dag-x_0)\|^2,
\quad \forall 0<\a \le t_0.
$$
We observe that
\begin{align*}
\frac{\int_{c_2\a}^\a d \|E_t(x^\dag-x_0)\|^2}{\int_0^\a d \|E_t(x^\dag-x_0)\|^2}
&=1-\frac{\int_0^{c_2\a} d \|E_t(x^\dag-x_0)\|^2}{\int_0^\a d\|E_t(x^\dag-x_0)\|^2}\\
&=1- \frac{\|E_{c_2\a}(x^\dag-x_0)\|^2}{\|E_\a(x^\dag-x_0)\|^2}.
\end{align*}
Therefore, for this scheme, Assumption \ref{ass:KN} is equivalent to the existence
of constants $0<c_2<1$, $0<\theta<1$ and $0<t_0<\|A\|$ such that
\begin{equation}\label{eq:KN-inequality}
\|E_{c_2\a}(x^\dag-x_0)\|\le \theta\|E_\a(x^\dag-x_0)\|, \quad \forall 0<\a\le t_0.
\end{equation}
}
\end{example}

\begin{example}
{\rm
For the asymptotical regularization we have $r_\a(t)=e^{-t/\a}$. Since $e^{-t/\a}\ge e^{-1}$ for
$c_2\a\le t\le \a$, it is easy to see that Assumption \ref{ass:KN} holds if (\ref{eq:KN-inequality})
is satisfied.
}
\end{example}

\begin{example}
{\rm
For the Landweber iteration with $\|A\|=1$, we have $r_\a(t)=(1-t)^{[1/\a]}$, where $[1/\a]$ denotes the largest
integer that is not greater than $1/\a$. Observing that for $0<t\le \a\le 1/2$ there holds
$(1-t)^{[1/\a]}\ge (1-t)^{1/\a} \ge (1-\a)^{1/\a} \ge 1/4$. Therefore, Assumption \ref{ass:KN}
 holds if (\ref{eq:KN-inequality}) is satisfied.
}
\end{example}

\subsection{Main result and discussion}
\label{sec:main-result}

The main result in this study is as follows.

\begin{thm}\label{thm:main}
Let assumptions \ref{ass:GWN}--\ref{ass:KN} hold. Let $\a_{RG}$ be chosen according to the
statistical RG-rule with $\kappa =\sqrt{8|\log
  (1/\delta)|/\cN(\alpha_{0})}$. Then there is a constant
$C$ such that
$$
\lr{\e{\norm{\xast - x_{\a_{RG}}^{\delta}}{}^{2}}}^{1/2} \leq C
\inf_{0 < \alpha \leq \alpha_0}
\set{\norm{\xa - \xast}{} + \frac{\delta (1+\sqrt{|\log (1/\delta)|})}{\Theta_{\rhon}(\alpha)}  }.
$$
  \end{thm}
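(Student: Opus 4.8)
The plan is to establish the oracle inequality by splitting the error into a deterministic (bias) part and a stochastic (variance) part, and to control the stopping index $\aRG$ from both sides by comparing it against the oracle-optimal parameter. The root mean squared error at $\aRG$ must be bounded by the infimum over all $\alpha$ of the approximation error $\norm{\xa - \xast}{}$ plus the noise term $\delta(1+\sqrt{|\log(1/\delta)|})/\Theta_{\rhon}(\alpha)$.

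\medskip

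First I would set up the standard decomposition. Writing $\xad - \xast = (\xa - \xast) + (\xad - \xa)$, the first term is the deterministic propagated approximation error, while the second is the noise propagation $\ga(\opA)(\zd - z) = \delta \ga(\opA)\zeta$. The key probabilistic input is a concentration estimate: using Assumption~\ref{ass:GWN} and the fact that $\zeta$ is Gaussian with covariance $\opA$, I expect that $\e{\norm{\delta\ga(\opA)\zeta}{}^{2}} = \delta^{2}\tr{\ga(\opA)^{2}\opA}$, which relates to the effective dimension $\cN$ and hence to $\rhon$ through $\delta/\Theta_{\rhon}(\alpha) = \delta\rhon(\alpha)/\alpha \cdot \alpha = \delta/(\alpha\rhon(\alpha))^{-1}$. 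The logarithmic factor $\sqrt{|\log(1/\delta)|}$ and the specific choice $\kappa = \sqrt{8|\log(1/\delta)|/\cN(\alpha_{0})}$ strongly suggest that one needs a high-probability bound (via a Gaussian tail / Borell--TIS type inequality) controlling the event on which the statistical discrepancy functional in~(\ref{ali:regular}) deviates from its mean, and that the error must be split over the good event and its complement, with the complement contributing only a lower-order term because of the $\kappa$-inflation of the threshold.

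\medskip

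The core of the argument is a two-sided comparison of $\aRG$ with the oracle parameter. For the \textbf{upper bound on the error at the stopping index}, I would use that $\aRG$ satisfies the RG-criterion~(\ref{ali:regular}) (or the emergency stop~(\ref{ali:emergency})), which caps the weighted residual $\norm{\sa(\opA)(\opA\xad - \zd)}{}$ at order $\delta/\rhon(\aRG)$; translating this residual bound back into an error bound on $\norm{\xast - \xad}{}$ is precisely where the auxiliary deterministic-noise analysis promised in Section~\ref{sec:aux} enters, since the RG-functional controls the total error for Lavrent'iev/Tikhonov-type schemes without early saturation. Conversely, for the \textbf{lower bound} — i.e.\ showing $\aRG$ is not chosen too large, so that the approximation error at $\aRG$ does not exceed that at the oracle — I would use that for every $\alpha > \aRG$ in $\Delta_{q}$ the criterion~(\ref{ali:regular}) \emph{fails}, giving a lower bound on the residual there; this is the step where Assumption~\ref{ass:KN} (self-similarity) is indispensable, because it is exactly what guarantees that the residual functional reflects the genuine approximation error rather than being masked by the large singular values, as explained in the introduction.

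\medskip

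The main obstacle, I expect, is the interplay between the emergency stop and the self-similarity condition when transferring the deterministic estimates to the statistical setting. Specifically, one must show that on the high-probability event the statistical stopping index $\aRG$ behaves like its deterministic counterpart up to the $\kappa$-perturbation, and that Assumption~\ref{ass:KN} continues to yield the reverse inequality after the random fluctuation of the discrepancy functional has been absorbed into the threshold. Controlling the discretization penalty from the geometric grid $\Delta_{q}$ (a factor depending on $q$, harmless because $\Theta_{\rhon}$ and $\rhon$ are comparable across one grid step) and verifying that the emergency stop~(\ref{ali:emergency}) only triggers when $\Theta_{\rhon}(\alpha)$ is already at noise level — so that it contributes a term of the required order $\delta/\Theta_{\rhon}(\alpha)$ and nothing worse — will complete the estimate. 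Assembling the good-event bound with the lower-order contribution from the exceptional event and taking the infimum over $\alpha$ then yields the stated oracle inequality.
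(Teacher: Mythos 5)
Your high-level architecture is the paper's: a Cauchy--Schwarz split of the risk over a good noise event and its complement, reduction of the good-event error to the deterministic RG analysis of Section~\ref{sec:aux}, and Gaussian concentration plus a union bound over $\Delta_q$ (which is where $\kappa$ and the logarithmic factor enter). But you have Assumption~\ref{ass:KN} doing the wrong job. The case you assign it to --- the criterion \emph{failing} at every $\alpha>\aRG$ --- is the elementary half of the comparison: failure at $\alpha$ gives $\tau\delta(\alpha)\le \norm{\sa(\opA)\ra(\opA)(\opA x_0-\zd)}{}\le \delta(\alpha)+\alpha\norm{\xa-\xast}{}$, so the noise term $\delta(\alpha)/\alpha$ is dominated by the bias at $\alpha$ (this is Lemma~\ref{lem:case1}); no self-similarity is used there. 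Assumption~\ref{ass:KN} is indispensable in the \emph{other} case, $\alpha\le\aast$: the rule's success caps the weighted residual at $\aast$ (Lemma~\ref{lem:case2}), and one must convert that cap into a bound on the excess bias $\norm{x_{\aast}-\xa}{}$. A small weighted residual does not control the bias without self-similarity --- that is precisely the ``masking'' phenomenon you describe --- and Lemma~\ref{lem:kn} and Corollary~\ref{cor:xb-xa-bound} exist exactly to make this conversion. Executed as written (self-similarity on the failure side, nothing on the success side), your comparison cannot be closed when the oracle parameter lies below $\aRG$.

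The second gap is the exceptional event. The Cauchy--Schwarz splitting requires a bound on $\lr{\e{\norm{\xast-x_{\aRG}^{\delta}}{}^{4}}}^{1/4}$, and you cannot obtain it from Gaussian moment bounds directly: $\aRG$ is data-dependent, so $x_{\aRG}^{\delta}$ is \emph{not} Gaussian. Your sketch assumes the complement contributes a lower-order term once its probability is small, but small probability alone is not enough; the error on that event must also have a bounded fourth moment, and a priori it could be arbitrarily large at a random stopping index. The paper's fix is the pointwise estimate $\norm{\xast-\xad}{}\le\norm{\xast-\xa}{}+c_*\,\delta\,\norm{\sa^{1/2}(\opA)\zeta}{}/\alpha$, valid for every realization, combined with the monotonicity of $\alpha\mapsto\sa^{1/2}(t)/\alpha$ and the deterministic floor $\aRG\ge\hat\alpha$ guaranteed by the emergency stop: this majorizes the error by a quantity involving only the Gaussian element $s_{\hat\alpha}^{1/2}(\opA)\zeta$ at the \emph{fixed} parameter $\hat\alpha$, whose fourth moment is bounded in Corollary~\ref{cor:zka-bound}, and $\Theta_{\rhon}(\hat\alpha)\ge q\eta\delta$ turns the resulting bound into a constant. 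In your plan the emergency stop appears only as a nuisance to be checked harmless on the good event; its essential role --- supplying the deterministic lower bound on $\aRG$ that makes any uniform moment bound possible --- is missing, and without it the contribution of the bad event cannot be shown to be of lower order.
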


The oracle inequality as established in Theorem~\ref{thm:main} allows
to state the error bound which is obtained under \emph{known} general source
condition and by an a priori parameter choice.
We recall some  notions.

\begin{de}[general source set]\label{de:gensource}
Given an index function $\psi$  that is continuous, non-negative, and non-decreasing
on $[0, \|A\|]$ with $\psi(0)=0$, the set
$$
H_{\psi} := \set{x\in X: \, x= \psi(A)v \mbox{ for some }\norm{v}{}\leq 1 },
$$
is called a general source set.
\end{de}

For solutions~$\xast$ which belong to some source set, the bias
$\norm{\xa - \xast}{}$ can be bounded under the assumption that the
chosen regularization has enough qualification, see e.g.\ \cite{MR2318806}

\begin{de}[qualification]\label{de:quali}
The regularization is said to have qualification~$\psi$ if there is a
constant $\gamma<\infty$ such that
$$
\abs{\ra(t)}\psi(t) \leq \gamma \psi(\alpha), \qquad \alpha>0.
$$
\end{de}

Notice  that $x^\dag-x_\a=r_\a(A)(x^\dag-x_0)$. If the regularization has
qualification $\psi$ and $x^\dag-x_0\in H_\psi$, then
$$
\|x^\dag-x_\a\| \le \gamma \psi(\a)\|v\| \le \gamma \psi(\a).
$$
By choosing $\a_\delta>0$ to be the root of the equation
$$
\Theta_{\rhon\psi}(\alpha):=\Theta_{\rhon}(t) \psi(t) = \delta\left(1+\sqrt{|\log (1/\delta)|}\right),
$$
we can use the the oracle inequality in Theorem \ref{thm:main} to obtain the following result.

\begin{cor}\label{cor:main}
 Let the assumptions~\ref{ass:GWN}--\ref{ass:KN} hold, and let $\a_{RG}$ be chosen according to the statistical
  RG-rule with $\kappa = \sqrt{8|\log (1/\delta)|/\cN(\alpha_{0})}$.
If the regularization has qualification~$\psi$ then
$$
\sup_{x^\dag-x_0 \in H_{\psi}} \lr{\e{\norm{\xast -
      x_{\a_{RG}}^{\delta}}{}^{2}}}^{1/2} \le
C \psi\left(\Theta_{\rhon\psi}^{-1} \left(\delta(1+\sqrt{|\log (1/\delta)|}) \right)\right).
$$
\end{cor}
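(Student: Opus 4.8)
The plan is to derive the corollary as a direct consequence of the oracle inequality in Theorem~\ref{thm:main} by choosing, on the right-hand side of that inequality, the specific a~priori parameter $\alpha_\delta$ defined as the root of $\Theta_{\rhon\psi}(\alpha)=\delta(1+\sqrt{|\log(1/\delta)|})$. First I would recall that the oracle inequality bounds the risk by
$$
C\inf_{0<\alpha\le\alpha_0}\set{\norm{\xa-\xast}{}+\frac{\delta(1+\sqrt{|\log(1/\delta)|})}{\Theta_{\rhon}(\alpha)}},
$$
so that passing from the infimum to any admissible $\alpha$ only worsens the constant. Thus it suffices to produce one candidate $\alpha$ whose two summands are each controlled by $\psi(\Theta_{\rhon\psi}^{-1}(\delta(1+\sqrt{|\log(1/\delta)|})))$. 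The natural candidate is exactly $\alpha_\delta=\Theta_{\rhon\psi}^{-1}(\delta(1+\sqrt{|\log(1/\delta)|}))$, invoking that $\Theta_{\rhon}$ is continuous and strictly increasing (hence so is $\Theta_{\rhon\psi}=\Theta_{\rhon}\psi$ under the monotonicity of the index function~$\psi$), so its inverse is well defined.

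Next I would bound each term separately. For the bias term, the hypothesis $\xast-\xn\in H_\psi$ together with the qualification assumption gives, via the identity $\xast-\xa=\ra(\opA)(\xast-\xn)$ and Definition~\ref{de:quali}, the bound $\norm{\xa-\xast}{}\le\gamma\,\psi(\alpha_\delta)$, uniformly over the source set. For the variance term, by the very definition of $\alpha_\delta$ we have $\Theta_{\rhon}(\alpha_\delta)\,\psi(\alpha_\delta)=\delta(1+\sqrt{|\log(1/\delta)|})$, whence
$$
\frac{\delta(1+\sqrt{|\log(1/\delta)|})}{\Theta_{\rhon}(\alpha_\delta)}=\psi(\alpha_\delta).
$$
Substituting $\alpha_\delta=\Theta_{\rhon\psi}^{-1}(\delta(1+\sqrt{|\log(1/\delta)|}))$ back shows that both summands equal, up to the constant $\gamma$, the quantity $\psi(\Theta_{\rhon\psi}^{-1}(\delta(1+\sqrt{|\log(1/\delta)|})))$. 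Taking the supremum over $\xast-\xn\in H_\psi$ and adjusting the constant $C$ to absorb $\gamma$ then yields the claimed bound.

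The main point requiring care is the admissibility of $\alpha_\delta$, namely that $\alpha_\delta\le\alpha_0$ so that it genuinely competes in the infimum over $0<\alpha\le\alpha_0$; this holds for $\delta$ small enough since $\Theta_{\rhon\psi}$ is increasing and $\delta(1+\sqrt{|\log(1/\delta)|})\to 0$ as $\delta\to 0$, forcing its preimage below $\alpha_0$. Apart from this threshold condition the argument is purely a matter of substituting the balancing parameter and rewriting the variance term through the defining equation of $\alpha_\delta$; I do not anticipate a substantive obstacle, as the entire probabilistic and spectral difficulty has already been discharged in the proof of Theorem~\ref{thm:main}.
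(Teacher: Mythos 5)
Your proposal is correct and follows essentially the same route as the paper: the paper also bounds the bias by $\gamma\psi(\alpha)$ via the qualification, defines $\alpha_\delta$ as the root of $\Theta_{\rhon\psi}(\alpha)=\delta(1+\sqrt{|\log(1/\delta)|})$, and substitutes this balancing choice into the oracle inequality of Theorem~\ref{thm:main}. Your additional remark on the admissibility $\alpha_\delta\le\alpha_0$ for small $\delta$ is a point the paper leaves implicit, but it does not change the argument.
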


Thus, up to a logarithmic factor,  the rate in Corollary \ref{cor:main} coincides with the
one from \cite[Theorem 1]{B/M2010}, which is known to be order optimal
in many cases.

We conclude this section with an outline of the proof of Theorem \ref{thm:main}.
The basic idea is to reduce the argument to the one for bounded
deterministic noise. The bound in Theorem~\ref{thm:main} uses the
effective dimension $\cN$, or more precisely the function~$\rhon$.
This function naturally appears when considering the
average performance of the noise under the weight $\sa^{1/2}(\opA)$ because
\begin{equation}
  \label{eq:sa-rhon}
 \lr{\e{\norm{\sa^{1/2}(\opA)\zeta}{}^{2}}}^{1/2}  =
 \lr{\tr{\sa(\opA)\opA}}^{1/2}  = \sqrt{\alpha\cN(\alpha)} =
\frac 1 {\rhon(\alpha)},\ \alpha>0.
\end{equation}
Therefore, we choose a tuning parameter~$\kappa$, as specified in Theorem~\ref{thm:main}, and define the set
\begin{equation}
  \label{eq:ztn}
  Z_{\kappa} := \set{\zeta:\, \norm{\sa^{1/2}(\opA)\zeta}{}\leq (1+\kappa)
    \frac {1}{\rhon(\alpha)},\ \hat{\a} \leq \alpha \in\Delta_{q}},
\end{equation}
where $\hat{\a}$ is the largest number in $\Delta_q$ satisfying
$$
\Theta_{\rhon}(\hat{\a}) \le \eta (1+\kappa) \delta.
$$
Let $Z_\kappa^c$ denote the complement of $Z_\kappa$ in $X$. Since $X=Z_\kappa\bigcup Z_\kappa^c$,
we can use the Cauchy-Schwarz inequality to derive that
 \begin{equation}
    \label{eq:error-bound-major}   \lr{\e{\norm{\xast -
          \xad}{}^{2}}}^{1/2}
 \leq \sup_{\zeta\in Z_{\kappa}}\norm{\xast - \xad}{} + \lr{\e{\norm{\xast -
          \xad}{}^{4}}}^{1/4} \lr{\prob{Z^{c}_{\kappa}}}^{1/4};
  \end{equation}
see \cite[Proposition 3]{B/M2010}. We will estimate the two terms on the right side of
(\ref{eq:error-bound-major}) with $\a=\a_{RG}$. Uniformly for $\zeta\in Z_{\kappa}$ the first term
on the right can be considered as error estimate under bounded
deterministic noise;  and we will show in Section~\ref{sec:aux} that it can be
bounded by the right hand side of the oracle inequality in
Theorem~\ref{thm:main}. This analysis may be of independent interest.
In Section~\ref{sec:gwn} we
will use some concentration inequality for Gaussian elements in
Hilbert space to show that the second
term on the right in~(\ref{eq:error-bound-major}) is negligible; this is enough for us to
complete the proof of Theorem~\ref{thm:main}

\section{Auxiliary results for bounded noise}
\label{sec:aux}

The situation for bounded deterministic noise which resembles
the Gaussian white noise case is regularization under some
specifically chosen weighted noise. We recall the function~$\sa$
from~(\ref{eq:sa}). As could be seen from the set~$Z_{\kappa}$
in~(\ref{eq:ztn}) the approriate  setup will be as follows.

\begin{ass}\label{ass:noise}
{\it There is a function $\a\to \delta(\a)>0$ defined on $(0, \infty)$ that is non-decreasing, while
$\a\to \delta(\a)/\sqrt{\a}$ is non-increasing such that the noise $\zeta$ obeys
  \begin{equation}
    \label{eq:ass-noise}
    \delta \|s_\a^{1/2}(A)\zeta\| \leq \delta(\alpha),\qquad \hat{\a}\le \a \in \Delta_q,
  \end{equation}
where $\hat{\a}\in \Delta_q$ is the largest parameter such that $\hat{\a}\le \eta \delta(\hat{\a})$ with
$\eta>0$ being a given small number.
}
\end{ass}

Because $\a\to \delta(\a)/\sqrt{\a}$ is non-increasing and $\a\to \sqrt{\a}$ is strictly increasing,
it is easy to see that $\hat{\a}$ is well-defined.

\begin{rem}
  \label{rem:general-noise}
{\rm
The setup in Assumption \ref{ass:noise} on noise covers a variety of cases which have been
subsumed under the notion of \emph{general noise assumptions}, we refer
to \cite{0266-5611-27-3-035016,B/M2010b}.
Specifically, let us consider the following situation. Suppose that
the noise $\zeta$ allows for a noise bound for some parameter $\mu$ with
\begin{equation}
  \label{eq:zeta-bound}
  \norm{\opA^{-\mu}\zeta}{}\leq 1.
\end{equation}
In this case we can bound
$$
\delta \norm{\sa^{1/2}(\opA)\zeta}{} \leq
\delta \norm{\sa^{1/2}(\opA)\opA^{\mu}}{}\norm{\opA^{-\mu} \zeta}{}\leq
\norm{s_\a^{1/2}(\opA)\opA^{\mu}}{} \delta.
$$
It is easily verified that the operator
norms $\norm{\sa^{1/2}(\opA)\opA^{\mu}}{}$ are uniformly bounded for $\a>0$ if and only if
$0\leq \mu \leq 1/2$. In this range we easily obtain that
$$
\norm{\sa^{1/2}(\opA)\opA^{\mu}}{} \leq \alpha^{\mu},\quad
\alpha >0.
$$
The two limiting cases are $\mu=0$, where we assume $\norm{\zeta}{}=
\norm{T^{\ast}\xi}{}\leq 1$ which corresponds to \emph{large noise},
and $\mu=1/2$, where we assume $\norm{\opA^{-1/2}\zeta}{}= \norm{\xi}{}{\leq 1}$ which
corresponds to the usual noise assumption in linear inverse problems in Hilbert
spaces. In any of the cases $0\leq \mu \leq 1/2$ we get a bounding function
$\delta(\alpha) = \delta \alpha^{\mu}$, which obeys the requirements
made in Assumption~\ref{ass:noise}.
}
\end{rem}

Let $\hat{\a}\in \Delta_p$ be defined as in Assumption
\ref{ass:noise}, i.e. $\hat{\a}\in \Delta_q$ is the largest parameter such that
$\hat{\a}\le \eta \delta(\hat{\a})$.

\begin{de}[RG-rule]\label{def:RG}
Given $\tau>1$ and $\eta>0$,   we define $\aast\in \Delta_{q}$ to be the largest
parameter such that
  \begin{equation}
    \label{eq:rg-bound}
\a_*\ge \hat{\a} \quad \mbox{and} \quad \|s_{\a_*}(A)(A x_{\a_*}^\delta - \zd)\| \leq  \tau \delta(\a_*);
  \end{equation}
if such $\a_*$ does not exist, we define $\a_*:=\hat{\a}$.
\end{de}

We notice that the norm in the above criterion can be rewritten as
$$
 \norm{\sa(\opA) (\opA \xad - \zd)}{}
=  \norm{\sa(\opA)\ra(\opA)(\opA \xn - \zd)}{}.
$$

\subsection{Properties of the RG-rule}
\label{sec:RG-prop}


We give some technical consequences of the stopping criterion which will be used later.

\begin{lem}\label{lem:case1}
  Let $\a\in\Delta_{q}$ be any parameter such that $\a>\a_*$. Then there holds
$$
\frac{\delta(\a)}{\a} \leq \frac 1 {\tau -1} \norm{\xa
  - \xast}{}.
$$
\end{lem}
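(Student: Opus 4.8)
The plan is to exploit the fact that $\alpha>\alpha_\ast$ forces the regular stopping criterion to fail at $\alpha$, since $\alpha_\ast$ is defined to be the \emph{largest} parameter in $\Delta_q$ (with $\alpha_\ast\ge\hat\alpha$) for which~\eqref{eq:rg-bound} holds. Provided $\alpha\ge\hat\alpha$, the failure of~\eqref{eq:rg-bound} at $\alpha$ means
$$
\tau\,\delta(\alpha) < \norm{\sa(\opA)(\opA\xad-\zd)}{}.
$$
First I would rewrite the misfit using the identity noted just before the lemma, namely $\opA\xad-\zd=\ra(\opA)(\opA\xn-\zd)$, so that
$$
\norm{\sa(\opA)(\opA\xad-\zd)}{}=\norm{\sa(\opA)\ra(\opA)(\opA\xn-\zd)}{}.
$$

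Next I would split $\opA\xn-\zd=(\opA\xn-z)-\delta\zeta=\opA(\xn-\xast)-\delta\zeta$, using $z=\opA\xast$. This separates the misfit into a deterministic (bias) part and a noise part. The noise part is controlled directly by Assumption~\ref{ass:noise}: since $\abs{\ra(t)}\le 1$ and $\sa^{1/2}(t)\le 1$ by~\eqref{eq:ra-0-1} and the form of~$\sa$, one has
$$
\delta\,\norm{\sa(\opA)\ra(\opA)\zeta}{}\le \delta\,\norm{\sa^{1/2}(\opA)\zeta}{}\le\delta(\alpha),
$$
where the inner factor $\sa^{1/2}(\opA)\ra(\opA)$ has norm at most~$1$. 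For the bias part I would use the elementary spectral estimate $\sa(t)\,t=\dfrac{\alpha t}{t+\alpha}\le\alpha$, giving
$$
\norm{\sa(\opA)\ra(\opA)\opA(\xn-\xast)}{}\le\alpha\,\norm{\ra(\opA)(\xast-\xn)}{}=\alpha\,\norm{\xa-\xast}{},
$$
since $\xast-\xa=\ra(\opA)(\xast-\xn)$ by the definition of $\xa$.

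Combining these two bounds with the reverse inequality from the failure of the stopping test yields
$$
\tau\,\delta(\alpha)<\alpha\,\norm{\xa-\xast}{}+\delta(\alpha),
$$
from which $(\tau-1)\delta(\alpha)<\alpha\,\norm{\xa-\xast}{}$, and dividing by $(\tau-1)\alpha>0$ gives the claimed bound. The main obstacle I anticipate is bookkeeping around the emergency/admissibility constraint: I must confirm that any $\alpha\in\Delta_q$ with $\alpha>\alpha_\ast$ automatically satisfies $\alpha\ge\hat\alpha$ (so that Assumption~\ref{ass:noise} applies at $\alpha$), and that the ``if such $\alpha_\ast$ does not exist'' branch of Definition~\ref{def:RG} does not create a gap in the argument. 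Since $\alpha_\ast\ge\hat\alpha$ in all cases and $\alpha>\alpha_\ast$, we have $\alpha>\hat\alpha$, so~\eqref{eq:ass-noise} is valid at $\alpha$; and when $\alpha_\ast=\hat\alpha$ by default, every larger $\alpha$ still fails~\eqref{eq:rg-bound}, so the same chain of inequalities applies. The two operator-norm estimates $\norm{\sa^{1/2}(\opA)\ra(\opA)}{}\le 1$ and $\norm{\sa(\opA)\opA}{}\le\alpha$ are the only nonroutine points, and both follow immediately from the spectral calculus together with $0\le\ra\le 1$ and the explicit form of~$\sa$.
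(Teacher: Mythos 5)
Your proof is correct and follows essentially the same route as the paper's: failure of the stopping test at every $\alpha>\aast$, the splitting of $\opA\xn-\zd$ into a bias part and a noise part, and the two operator-norm bounds $\norm{\sa^{1/2}(\opA)\ra(\opA)}{}\le 1$ and $\norm{\sa(\opA)\opA}{}\le\alpha$ combined with Assumption~\ref{ass:noise}. Your extra bookkeeping on the constraint $\alpha>\hat{\a}$ and on the default branch of Definition~\ref{def:RG} only makes explicit what the paper leaves implicit.
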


\begin{proof}
Since $\a>\a_*$, by the definition of $\a_*$ we must have
$$
\tau \delta(\a) \le \|s_\a(A) r_\a(A) (A x_0-z^\delta)\|.
$$
Therefore, it follows from Assumption \ref{ass:noise} that
\begin{align*}
    \tau \delta(\alpha) &  \leq   \norm{\sa(\opA) \ra(\opA)(z - \zd)}{}
+  \norm{\sa(\opA)\ra(\opA)(\opA \xn - z)}{} \\
& \leq \norm{\sa^{1/2}(\opA)\ra(\opA)}{} \delta(\alpha) +
 \norm{\sa(\opA) A}{} \norm{ \xa- \xast}{}.
  \end{align*}
Since $0\leq \sa^{1/2}(t) \ra(t)\leq 1$ and $0\le s_\a(t) t\le \a$, we have
$\norm{\sa^{1/2}(\opA)\ra(\opA)}{}\leq 1$ and $\norm{\sa(\opA)\opA}{} \leq \alpha$.
Consequently
$$
(\tau - 1) \delta(\alpha) \leq {\alpha} \norm{\xa - \xast}{},
$$
which gives the estimate.
\end{proof}

\begin{lem}\label{lem:case2}
  Let the parameter~$\aast$ be chosen by the RG-rule in Definition~\ref{def:RG}. Then
$$
\norm{\saa(\opA)\raa(\opA)(\opA \xn - z)}{} \leq \gamma_0  \delta(\aast),
$$
where $\gamma_0:=\max\{1+ \tau, \eta\|x^\dag-x_0\|\}$.
\end{lem}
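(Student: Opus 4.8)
The plan is to split into the two cases of Definition~\ref{def:RG}, according to whether the RG-rule terminated with the regular stop or fell back on the emergency value $\aast = \hat\a$. The factor $\gamma_0 = \max\{1+\tau,\ \eta\norm{\xast-\xn}{}\}$ is clearly tailored to cover exactly these two situations, so I would organize the argument around that dichotomy and show that in each case the left-hand side is bounded by the corresponding term inside the maximum, times $\delta(\aast)$.

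First I would treat the case where $\aast$ satisfies the regular stopping inequality~(\ref{eq:rg-bound}), namely $\norm{\saa(\opA)\raa(\opA)(\opA\xn - \zd)}{} \le \tau\delta(\aast)$. The goal is to pass from the \emph{noisy} residual (involving $\zd$) to the \emph{noise-free} residual (involving $z = \opA\xast$). I would add and subtract $z$ and use the triangle inequality, writing
\begin{align*}
\norm{\saa(\opA)\raa(\opA)(\opA\xn - z)}{}
&\le \norm{\saa(\opA)\raa(\opA)(\opA\xn - \zd)}{}
   + \norm{\saa(\opA)\raa(\opA)(z - \zd)}{}.
\end{align*}
The first term is $\le \tau\delta(\aast)$ by the stopping criterion. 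For the second, I would recall that $z - \zd = -\delta\zeta$, split one factor of $\saa^{1/2}$ onto the noise, and bound $\norm{\saa^{1/2}(\opA)\raa(\opA)}{}\le 1$ (using $0\le \saa^{1/2}(t)\raa(t)\le 1$, exactly as in the proof of Lemma~\ref{lem:case1}), leaving $\delta\norm{\saa^{1/2}(\opA)\zeta}{}\le \delta(\aast)$ by Assumption~\ref{ass:noise}. This yields the bound $(1+\tau)\delta(\aast)$, which is absorbed by $\gamma_0$.

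Next I would handle the emergency case $\aast = \hat\a$, where the regular inequality need not hold. Here the noisy residual is no longer controlled, so the previous route fails; instead I would bound the noise-free quantity directly. Writing $\opA\xn - z = \opA(\xn - \xast)$ and using that $\saa(t)\,t = t\hat\a/(t+\hat\a)\le \hat\a$ together with $0\le\raa(t)\le 1$, I would get $\norm{\saa(\opA)\raa(\opA)\opA(\xn-\xast)}{}\le \hat\a\,\norm{\xast-\xn}{}$. Then the defining property of $\hat\a$ from Assumption~\ref{ass:noise}, namely $\hat\a\le \eta\delta(\hat\a)=\eta\delta(\aast)$, converts this to $\eta\norm{\xast-\xn}{}\,\delta(\aast)$, again absorbed by $\gamma_0$.

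The main obstacle, and the reason the two cases must be separated, is that the emergency stop gives no information about the residual and therefore cannot be handled by the noise-free/noisy splitting used in the regular case; one must fall back on the crude geometric bound $\saa(t)t\le\hat\a$ and on the definition of $\hat\a$. The only subtlety worth checking carefully is the pair of operator-norm estimates $\norm{\saa^{1/2}(\opA)\raa(\opA)}{}\le 1$ and $\norm{\saa(\opA)\opA}{}\le\aast$, both of which follow from the spectral calculus together with $0\le\raa(t)\le 1$ and the explicit form of $\saa$ in~(\ref{eq:sa}); these are routine and already appear in Lemma~\ref{lem:case1}.
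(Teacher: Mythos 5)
Your proposal is correct and matches the paper's own proof essentially line for line: the same case split (regular stop versus emergency fallback $\aast=\hat\a$), the same triangle-inequality decomposition with $\norm{\saa^{1/2}(\opA)\raa(\opA)}{}\le 1$ and Assumption~\ref{ass:noise} in the regular case, and the same bound $\saa(t)t\le\hat\a\le\eta\delta(\hat\a)$ in the emergency case. The only cosmetic difference is that the paper splits on $\aast=\hat\a$ versus $\aast>\hat\a$ rather than on whether the stopping inequality holds, which is immaterial.
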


\begin{proof}
If $\a_*=\hat{\a}$, then it follows from the definition of $\hat{\a}$ that $\a_*\le \eta \delta(\a_*)$.
Consequently
\begin{align*}
\|s_{\a_*}(A) r_{\a_*}(A) (A x_0-z)\| &= \|s_{\a_*}(A) r_{\a_*}(A) A (x^\dag-x_0)\|\\
& \le \a_* \|x^\dag-x_0\| \le \eta \|x^\dag-x_0\| \delta(\a_*).
\end{align*}
Otherwise we have that $\a_*>\hat{\a}$. Then by the definition of $\a_*$ we have
\begin{align*}
&\norm{\saa(\opA)\raa(\opA)(\opA \xn - z)}{} \\
&\leq \norm{\saa(\opA) \raa(\opA)(z - \zd)}{}
+    \norm{\saa(\opA)\raa(\opA)(\opA \xn - \zd)}{} \\
&\leq   \norm{s_{\aast}^{1/2}(\opA)\raa(\opA)}{} \delta(\aast) + \tau
\delta(\aast) \leq (1+\tau) \delta(\aast),
  \end{align*}
and the proof is complete.
\end{proof}

\subsection{Auxiliary inequalities: The impact of
  Assumption~\ref{ass:KN}}
\label{sec:ineq}

The following inequalities may be of general interest. The first one
goes back to~\cite{MR1680887,MR1702595}, see also~\cite[Lemma 2.4]{MR2684361}.

\begin{lem}\label{lem:qinian-ineq}
  For $0 < \alpha \leq \beta$ we have
$$
\norm{\xb - \xa}{} \leq \frac{1 + \gamma_{\ast}}{{\sqrt \alpha}}
\norm{\opA^{1/2}\sbeta^{1/2}(\opA)\rb(\opA)(\xast - \xn)}{} .
$$
\end{lem}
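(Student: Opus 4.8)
The plan is to express the difference $\xb - \xa$ using the spectral calculus and then bound it via an integral estimate. Recall from the construction that $\xast - \xa = \ra(\opA)(\xast - \xn)$ and likewise for $\beta$, so that
\begin{equation}
  \label{eq:diff-rep}
  \xb - \xa = (\xb - \xast) - (\xa - \xast) = \lr{\ra(\opA) - \rb(\opA)}(\xast - \xn).
\end{equation}
Thus the quantity to bound is $\norm{(\ra(\opA) - \rb(\opA))(\xast - \xn)}{}$. First I would invoke the pointwise inequality~(\ref{eq:lem23}), which gives
$$
0 \leq \ra(t) - \rb(t) \leq (1 + \gamma_{\ast})\frac{t}{\alpha + t} \rb(t),
$$
valid for $0 < \alpha \leq \beta$ (note the sign: since $\alpha \le \beta$, by monotonicity~(\ref{eq:ra-monotone}) we have $\ra \le \rb$, so one must be careful with which difference is nonnegative — I would apply~(\ref{eq:lem23}) with the roles arranged so the bounded quantity is the nonnegative difference $\rb - \ra$, and since we only need the norm, the sign is immaterial).

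Next, the key idea is to recognize the factor $t/(\alpha+t)$ as related to $\sa$: indeed $t/(\alpha+t) = 1 - \sa(t)$, but more usefully $\sqrt{t/(\alpha+t)} \le \sqrt{t}/\sqrt{\alpha}\cdot\sqrt{\alpha/(\alpha+t)} = (\sqrt{t}/\sqrt{\alpha})\,\sa^{1/2}(t)$. This is the crucial algebraic manipulation: I would write $t/(\alpha+t) = (t/\alpha)\,\sa(t)$ and hence
$$
\frac{t}{\alpha+t}\,\rb(t) = \frac{1}{\alpha}\, t\, \sa(t)\, \rb(t) \le \frac{1}{\alpha}\, t^{1/2}\,\sbeta^{1/2}(t)\,\rb(t)\cdot t^{1/2}\sa^{1/2}(t),
$$
using $\sa(t) \le \sbeta(t)$ for $\alpha \le \beta$. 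Here the term $t^{1/2}\sa^{1/2}(t) = \sqrt{t\alpha/(t+\alpha)} \le \sqrt{\alpha}$ can be pulled out as a uniform bound, leaving exactly $\opA^{1/2}\sbeta^{1/2}(\opA)\rb(\opA)$ acting on $\xast - \xn$, with a leading factor $(1+\gamma_{\ast})\sqrt{\alpha}/\alpha = (1+\gamma_{\ast})/\sqrt{\alpha}$. Passing from the pointwise estimate to the operator-norm (equivalently, spectral integral) bound on $\norm{(\ra(\opA)-\rb(\opA))(\xast-\xn)}{}$ is then a routine application of the functional calculus together with the fact that $0 \le \sa(t) \le \sbeta(t)$ on the spectrum.

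The main obstacle I anticipate is getting the weight bookkeeping exactly right: one must split the factor $t/(\alpha+t)\,\rb(t)$ so that precisely the operator $\opA^{1/2}\sbeta^{1/2}(\opA)\rb(\opA)$ survives inside the norm while everything else is controlled by the constant $(1+\gamma_{\ast})/\sqrt{\alpha}$. The delicate points are (i) using $\sa \le \sbeta$ to replace the $\alpha$-dependent residual/Tikhonov factor by its $\beta$-counterpart where needed, and (ii) verifying the uniform bound $\sup_{t} t^{1/2}\sa^{1/2}(t) = \sup_t \sqrt{t\alpha/(t+\alpha)} \le \sqrt{\alpha}$, which is where the $1/\sqrt{\alpha}$ in the statement comes from. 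Once these spectral factors are matched, the estimate follows immediately by monotonicity of the functional calculus, so I expect no analytic difficulty beyond this algebraic matching.
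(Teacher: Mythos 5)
Your proof is correct and follows essentially the same route as the paper's: the same representation $\xb-\xa=\pm\lr{\rb(\opA)-\ra(\opA)}(\xast-\xn)$, the same key inequality~(\ref{eq:lem23}), and the same splitting of the factor $t\,\sa(t)$ using $\sa(t)\le\sbeta(t)$ together with the uniform bound $t^{1/2}\sa^{1/2}(t)\le\sqrt{\alpha}$. The only (cosmetic) difference is that you carry out this factorization pointwise in $t$ and then invoke the spectral calculus, whereas the paper writes the identical factorization at the operator level via $\opA\sa(\opA)=\opA^{1/2}\sa^{1/2}(\opA)\,\sbeta^{-1/2}(\opA)\sa^{1/2}(\opA)\,\opA^{1/2}\sbeta^{1/2}(\opA)$ and bounds the two auxiliary operator norms by $\sqrt{\alpha}$ and $1$.
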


\begin{proof}
We first notice that $\xb - \xa = (\rb(\opA) - \ra(\opA))(\xast - \xn)$.
  The bound established in~(\ref{eq:lem23}) yields that
  \begin{align*}
\norm{\xb - \xa}{}  &= \norm{(\rb(\opA) - \ra(\opA))(\xast - \xn)}{}\\
&\leq (1 + \gamma_{\ast})\norm{\opA \lr{\alpha + \opA}^{ -1}
  \rb(\opA)(\xast - \xn)}{}  \\
 & = \frac{1 + \gamma_{\ast}}{\alpha}
 \norm{\opA\sa(\opA)\rb(\opA)(\xast - x_{0})}{}.
  \end{align*}
We may write
$$
\opA\sa(\opA) = \opA^{1/2}\sa^{1/2}(\opA)
\frac{1}{\sbeta^{1/2}}(\opA) \sa^{1/2}(\opA) \opA^{1/2} \sbeta^{1/2}(\opA).
$$
Observing that $0\le s_\a(t) t^{1/2}\le \sqrt{\a}$ and $\sa(t) \leq \sbeta(t)$ for $t\ge 0$, we have that
$\norm{\sa^{1/2}(\opA)\opA^{1/2}}{}\leq \sqrt\alpha$ and
$\norm{\frac{1}{\sbeta^{1/2}}(\opA) \sa^{1/2}(\opA)}{}\leq 1$. Therefore
$$
\norm{\opA \sa(\opA)
  \rb(\opA)(\xast - \xn)}{} \leq \sqrt \alpha \norm{ \opA^{1/2}
  \sbeta^{1/2}(\opA)\rb(\opA)(\xast - \xn)}{},
$$
which allows to  complete the proof.
\end{proof}

The bound from Lemma \ref{lem:qinian-ineq} does not suffice, and we need
the following strengthening, where Assumption~\ref{ass:KN} is crucial.

\begin{lem}\label{lem:kn}
Suppose that Assumption~\ref{ass:KN} holds true. Then there is a constant $C<\infty$ such that
for $0 < \alpha \leq \alpha_0$  there holds
$$
\norm{\opA^{1/2}\sa^{1/2}(\opA)\ra(\opA)(\xast -
  \xn)}{}
 \leq \frac{C}{\sqrt\alpha} \norm{\sa(\opA)\ra(\opA)\opA(\xast - \xn)}{}.
$$
\end{lem}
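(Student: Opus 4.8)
The plan is to reduce everything to the spectral representation of $\opA$ and to compare the two sides under the integral sign. Writing $w := \xast - \xn$ and letting $d\mu(t) := d\|E_t w\|^2$ denote the spectral measure of $\opA$ at $w$, squaring both sides turns the asserted inequality into
\[
\int_0^{\norm{\opA}{}} t\,\sa(t)\,\ra^2(t)\,d\mu(t)\ \le\ \frac{C^2}{\alpha}\int_0^{\norm{\opA}{}} t^2\,\sa^2(t)\,\ra^2(t)\,d\mu(t).
\]
The decisive algebraic fact is that, because $\sa(t)=\alpha/(t+\alpha)$, the right-hand integrand is exactly $\tfrac{t}{t+\alpha}$ times the left-hand one; indeed $\tfrac1\alpha\,t^2\sa^2(t)=\tfrac{t}{t+\alpha}\,t\,\sa(t)$. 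Hence the whole difficulty is concentrated in the weight $w_\alpha(t):=t/(t+\alpha)\in[0,1)$, which is bounded below for large $t$ but degenerates as $t\to 0$.

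I would therefore split the spectral integral at the threshold $t=c_2\alpha$ furnished by Assumption~\ref{ass:KN}. On the tail $t\ge c_2\alpha$ one has $w_\alpha(t)\ge c_2/(1+c_2)$, so the left-hand integrand is pointwise at most $(1+c_2)/c_2$ times the right-hand integrand, and this portion is controlled immediately, with no appeal to Assumption~\ref{ass:KN}.

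The head $\int_0^{c_2\alpha} t\,\sa(t)\,\ra^2(t)\,d\mu(t)$ is where Assumption~\ref{ass:KN} enters. Using the crude pointwise bounds $\sa(t)\le 1$, $\ra(t)\le 1$ and $t\le c_2\alpha$ gives $\int_0^{c_2\alpha} t\,\sa\,\ra^2\,d\mu\le c_2\alpha\int_0^{\alpha}d\mu$. Assumption~\ref{ass:KN} then trades this low-frequency mass for weighted high-frequency mass, $\int_0^{\alpha}d\mu\le c_1^2\int_{c_2\alpha}^\infty \ra^2\,d\mu$. Finally, on $[c_2\alpha,\infty)$ the elementary estimate $(1+\alpha/t)^2\le(1+1/c_2)^2$ is equivalent to $\alpha\le(1+1/c_2)^2\,\tfrac{\alpha t^2}{(t+\alpha)^2}$, so that $\alpha\int_{c_2\alpha}^\infty\ra^2\,d\mu$ is dominated by a fixed multiple of $\int_{c_2\alpha}^\infty \tfrac{\alpha t^2}{(t+\alpha)^2}\ra^2\,d\mu=\tfrac1\alpha\int_{c_2\alpha}^\infty t^2\sa^2\ra^2\,d\mu$, which is precisely a piece of the right-hand side. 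Chaining these three inequalities bounds the head by the right-hand side, and adding the tail estimate yields the claim, e.g.\ with $C^2=\tfrac{1+c_2}{c_2}+c_1^2c_2\bigl(1+\tfrac1{c_2}\bigr)^2$.

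The main obstacle is exactly this head estimate: since the weight $t/(t+\alpha)$ vanishes at the origin, no pointwise comparison is available there, and the uncontrolled low-frequency content of $w$ must be converted into high-frequency content --- which is the entire reason Assumption~\ref{ass:KN} is imposed. A minor point is that Assumption~\ref{ass:KN} is stated only for $\alpha\le t_0$ while the lemma is claimed for $\alpha\le\alpha_0$; for $\alpha\in(t_0,\alpha_0]$ the parameter is bounded away from $0$, so splitting the head further at $t=c_2 t_0$, invoking Assumption~\ref{ass:KN} at the fixed scale $t_0$ together with the monotonicity $r_{t_0}(t)\le\ra(t)$ from~(\ref{eq:ra-monotone}), absorbs this range into an enlarged constant $C$.
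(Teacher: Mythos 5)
Your proof is correct and follows essentially the same route as the paper's: split the spectral integral into a low-frequency part, where Assumption~\ref{ass:KN} converts the uncontrolled mass into weighted high-frequency mass, and a tail part handled by the pointwise bound $t/(t+\alpha)\ge c_2/(1+c_2)$, with the range $t_0<\alpha\le\alpha_0$ absorbed by invoking Assumption~\ref{ass:KN} at the fixed scale $t_0$ together with the monotonicity $r_{t_0}(t)\le\ra(t)$. The only cosmetic differences are the split point ($c_2\alpha$ rather than $\alpha$) and your explicit identity $\tfrac{1}{\alpha}t^2\sa^2(t)=\tfrac{t}{t+\alpha}\,t\,\sa(t)$, which the paper exploits implicitly through bounds such as $1\le\tfrac{2}{\alpha}t\sa(t)$ for $t\ge\alpha$.
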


\begin{proof}
We use spectral calculus to write
\begin{align*}
\norm{\opA^{1/2}\sa^{1/2}(\opA)\ra(\opA)(\xast -
  \xn)}{}^{2} =I_1(\alpha) +I_2(\alpha),
\end{align*}
where
\begin{align*}
I_1(\alpha)&:= \int_{0}^{ \alpha} t \sa(t)  \ra^{2}(t) \;
d\norm{E_{t}(\xast - \xn)}{}^{2} \\
I_2(\alpha)&:= \int_{\alpha}^{ \infty} t  \sa(t)  \ra^{2}(t) \;
d\norm{E_{t}(\xast - \xn)}{}^{2}.
\end{align*}
We first bound $I_2$.
For $t\geq \alpha$ we have that $\alpha(t+\alpha) \leq 2 \alpha t$,
thus $1 \leq \tfrac 2 \alpha t \sa(t)$, yielding
\begin{align*}
I_2(\alpha) \leq
\frac 2 {\alpha}\int_{\alpha}^{ \infty} t^{2} \sa^{2}(t) \ra^{2}(t) \;
d\norm{E_{t}(\xast - \xn)}{}^{2} \leq \frac 2 {\alpha} \norm{\sa(\opA)\ra(\opA)\opA(\xast - \xn)}{}^{2}.
\end{align*}
To estimate~$I_1(\alpha)$ we will use Assumption~\ref{ass:KN}.
 We will consider two cases:
$0<\alpha \le t_0$ and $t_0 <\alpha \le \alpha_0$.

When $0<\alpha\le t_0$, we use Assumption~\ref{ass:KN} to
obtain from $t\sa(t)\leq \alpha$ that
\begin{align*}
I_1(\alpha)   \leq \alpha  \int_{0}^{ \alpha}
\;d\norm{E_{t}(\xast - \xn)}{}^{2}  \leq c_1^{2}\alpha \int_{c_2 \alpha}^{\infty} r_\alpha^2 (t)
\;d\norm{E_{t}(\xast - \xn)}{}^{2}.
\end{align*}
Since $t/(t + \alpha)\geq c_2/(1+c_2)$ for $t\ge c_2 \alpha$, we further obtain
\begin{align*}
I_1(\alpha) & \leq  \frac{c_1^{2}(1+c_2)^2}{c_2^2\alpha}   \int_{c_2 \alpha}^{\infty}
\frac{\alpha^{2}t^{2}}{(t + \alpha)^{2}}\ra^{2}(t) \;d\norm{E_{t}(\xast - \xn)}{}^{2}\\
&= \frac{c_1^{2}(1+c_2)^2}{c_2^2\alpha} \int_{c_2 \alpha}^{\infty} \sa^{2}(t)\ra^{2}(t) t^{2}
\;d\norm{E_{t}(\xast - \xn)}{}^{2}\\
&\leq  \frac{c_1^{2} (1+c_2)^2}{c_2^2\alpha} \norm{\sa(\opA)\ra(\opA)\opA(\xast - \xn)}{}^{2}.
\end{align*}

Now we consider the case $t_0 <\alpha \leq\alpha_0$. We write
$I_1(\alpha)=I_1^{(1)}(\alpha) + I_1^{(2)}(\alpha)$, where
\begin{align*}
I_1^{(1)}(\alpha) & :=\int_0^{t_0} t  s_\alpha (t)
r_\alpha^2(t) d \|E_t (x^\dag-x_0)\|^2,\\
I_1^{(2)}(\alpha) & := \int_{t_0}^\alpha  t s_\alpha (t)
r_\alpha^2(t) d \|E_t (x^\dag-x_0)\|^2.
\end{align*}
We can bound, by using  Assumption~\ref{ass:KN},  the term~$I_1^{(1)}(\alpha)$ as
$$
I_1^{(1)}(\alpha) \le c_1^{2} \alpha \int_{c_2 t_0}^\infty r_{t_0}^2 (t) d \|E_t(x^\dag-x_0)\|^2.
$$
Since $t_0\le \alpha$ implies $r_{t_0}(t) \le r_\alpha(t)$, we have
$$
I_1^{(1)}(\alpha) \le c_1^{2} \alpha \int_{c_2 t_0}^\infty r_\alpha^2 (t) d \|E_t(x^\dag-x_0)\|^2.
$$
Observing that for $t\ge c_2 t_0$ there holds $\frac{t}{t+\alpha} \ge \frac{t}{t+\alpha_0}
\ge \frac{c_2 t_0}{c_2 t_0 +\alpha_0}$, we further obtain
\begin{align*}
I_1^{(1)}(\alpha) & \le \left(\frac{c_2 t_0 +\alpha_0}{c_2
    t_0}\right)^2 \frac{c_1^{2}}{ \alpha}
\int_{c_2 t_0}^\infty \frac{t^2\alpha^{2}}{(t+\alpha)^2} r_\alpha^2(t) d \|E_t(x^\dag-x_0)\|^2\\
& = \frac{c_1^{2}}{ \alpha}\left(\frac{c_2 t_0 +\alpha_0}{c_2 t_0}\right)^2
\int_{c_2 t_0}^\infty s_\alpha^2(t) r_\alpha^2(t) t^2 d \|E_t(x^\dag-x_0)\|^2\\
&\le \frac{c_1^{2}}{ \alpha}\left(\frac{c_1 t_0 +\alpha_0}{c_2 t_0}\right)^2
\|s_\alpha(A) r_\alpha(A) A (x^\dag-x_0)\|^2.
\end{align*}

To bound $I_{1}^{(2)}$, we observe that for $t_0 \le t\le \alpha$ there holds
$  1\leq \frac{\alpha_0+t_0}{t_0 \alpha}t\sa(t)$.
Consequently
\begin{align*}
I_1^{(2)}(\alpha) &\le \frac{\alpha_0+ t_0}{t_0 \alpha}
\int_{t_0}^\alpha t^2
\sa^2(t) r_\alpha^2(t) d \|E_t(x^\dag-x_0)\|^2 \\
& \le \frac{\alpha_0+ t_0}{t_0 \alpha} \|s_\alpha(A)
r_\alpha(A) A (x^\dag-x_0)\|^2.
\end{align*}
Combining the above estimates we therefore obtain the desired bound
with
$C=\lr{
2+ \frac{c_1^{2}(1+c_2)^2}{c_2^2 } +\frac{\alpha_0+ t_0}{t_0}
  + c_1^{2} \lr{\frac{c_2 t_0 +\alpha_0}{c_2 t_0}}^2}^{1/2}$.
\end{proof}

We summarize the results from Lemma \ref{lem:qinian-ineq} and Lemma
\ref{lem:kn} as follows.

\begin{cor}
  \label{cor:xb-xa-bound}
Let Assumption \ref{ass:KN} hold. Then there is a constant $C<\infty$ such that
for all $0<\a\le \beta\le \a_0$ there holds
$$
\|\xb - \xa\| \leq \frac{C}{\sqrt{\a \beta}}
\|s_\beta(A)\rb(A) A(\xast - \xn)\|.
$$

\end{cor}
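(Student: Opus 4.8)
The plan is to read this corollary as the straightforward concatenation of the two preceding lemmas, with the quantity $\norm{\opA^{1/2}\sbeta^{1/2}(\opA)\rb(\opA)(\xast - \xn)}{}$ acting as the pivot that links them. Since all the analytic effort has already been spent in proving Lemma~\ref{lem:qinian-ineq} and Lemma~\ref{lem:kn}, the present task is purely one of bookkeeping: tracking the powers of $\a$ and $\beta$ and collecting the accumulated constant.

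First I would apply Lemma~\ref{lem:qinian-ineq} directly. For $0<\a\le\beta$ it gives
$$
\norm{\xb - \xa}{} \leq \frac{1 + \gamma_{\ast}}{\sqrt \a}\,
\norm{\opA^{1/2}\sbeta^{1/2}(\opA)\rb(\opA)(\xast - \xn)}{},
$$
so the problem reduces to controlling the factor on the right. The key step, and the only point requiring a moment of care, is to invoke Lemma~\ref{lem:kn} \emph{at the parameter $\beta$} rather than at $\a$. This is precisely why the hypothesis of the corollary asks for $\beta\le\a_0$ (and not merely $\a\le\a_0$): it is the admissibility condition needed to run Lemma~\ref{lem:kn} for the index $\beta$, whose residual and weight functions $\rb,\sbeta$ are exactly the ones appearing in the bridging quantity. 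Under Assumption~\ref{ass:KN} that lemma then yields
$$
\norm{\opA^{1/2}\sbeta^{1/2}(\opA)\rb(\opA)(\xast - \xn)}{}
 \leq \frac{C}{\sqrt\beta}\,\norm{\sbeta(\opA)\rb(\opA)\opA(\xast - \xn)}{}.
$$

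Chaining the two displays produces the combined prefactor $(1+\gamma_\ast)C/\sqrt{\a\beta}$ and hence the asserted bound, after relabelling the constant as $C$. I do not anticipate any genuine obstacle: Assumption~\ref{ass:KN} enters solely through Lemma~\ref{lem:kn}, and once one observes that this lemma must be applied at the \emph{larger} index $\beta$, the whole argument is a one-line substitution. The only thing worth double-checking is that the index functions match verbatim under the relabelling $\a\mapsto\beta$ in Lemma~\ref{lem:kn}, so that the right-hand side of that lemma lands exactly on the target norm $\norm{\sbeta(\opA)\rb(\opA)\opA(\xast - \xn)}{}$ of the corollary.
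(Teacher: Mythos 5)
Your proposal is correct and matches the paper's intent exactly: the corollary is stated there as a summary obtained by chaining Lemma~\ref{lem:qinian-ineq} with Lemma~\ref{lem:kn} applied at the parameter $\beta$, which is precisely your argument, including the observation that the hypothesis $\beta\le\a_0$ is what licenses invoking Lemma~\ref{lem:kn} at the larger index. The resulting constant $(1+\gamma_\ast)C/\sqrt{\a\beta}$ and the bookkeeping are as in the paper.
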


\subsection{Deterministic oracle inequality}
\label{sec:oracle}

In this section we state the main auxiliary result for bounded
deterministic noise, as this seems to be of independent interest.

\begin{thm}\label{thm:oracle-det}
Let the assumptions \ref{ass:KN} and \ref{ass:noise} hold, and let
the parameter $\alpha_{\ast}$ be chosen by the RG-rule starting with $\alpha_{0}$.
Then there holds the oracle inequality, i.e. there is a constant $C$ such that
  \begin{equation}
    \label{eq:oracle}
    \norm{x_{\alpha_{\ast}}^{\delta} - \xast}{} \leq C
    \inf_{0< \alpha\leq \alpha_0}\set{\norm{\xa - \xast}{} +
      \frac{\delta(\alpha)}{\alpha}}.
  \end{equation}
\end{thm}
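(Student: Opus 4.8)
The plan is to dominate the full error $\norm{x_{\aast}^{\delta}-\xast}{}$ by a constant multiple of the oracle functional $\Phi(\alpha):=\norm{\xa-\xast}{}+\delta(\alpha)/\alpha$ for \emph{every} competing $\alpha\in(0,\alpha_0]$, and then to take the infimum, which gives~(\ref{eq:oracle}). I would begin from the splitting $x_{\aast}^{\delta}-\xast=\delta g_{\aast}(\opA)\zeta+(x_{\aast}-\xast)$, separating the propagated noise from the noise-free bias. For the noise term, writing $g_{\aast}(\opA)=\lr{g_{\aast}(\opA)s_{\aast}^{-1/2}(\opA)}s_{\aast}^{1/2}(\opA)$ and combining the inequalities $(t+\alpha)\ga(t)\le 1+\gamma_{\ast}$ and $\alpha\ga(t)\le\gamma_{\ast}$, I would obtain $\norm{g_{\aast}(\opA)s_{\aast}^{-1/2}(\opA)}{}\le \sqrt{\gamma_{\ast}(1+\gamma_{\ast})}/\aast$; since $\aast\ge\hat{\a}$, Assumption~\ref{ass:noise} then yields $\delta\norm{g_{\aast}(\opA)\zeta}{}\le c\,\delta(\aast)/\aast$. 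Hence it suffices to bound both $\delta(\aast)/\aast$ and $\norm{x_{\aast}-\xast}{}$ by $C\Phi(\alpha)$.

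Fix $\alpha\in(0,\alpha_0]$ and distinguish $\alpha\ge\aast$ from $\alpha<\aast$. When $\alpha\ge\aast$, monotonicity of the residual functions gives the bias immediately, $\norm{x_{\aast}-\xast}{}=\norm{\raa(\opA)(\xast-\xn)}{}\le\norm{\ra(\opA)(\xast-\xn)}{}=\norm{\xa-\xast}{}$. For the noise term I would split once more: if $\aast\le\alpha\le\aast/q$, the monotonicity of $\delta(\cdot)$ gives $\delta(\aast)/\aast\le q^{-1}\delta(\alpha)/\alpha$; if $\alpha>\aast/q$, I would apply Lemma~\ref{lem:case1} at the grid neighbour $\aast/q\in\Delta_q$, which exceeds $\aast$ so that the RG-rule did not stop there, to get $\delta(\aast)/\aast\le q^{-1}\,\delta(\aast/q)/(\aast/q)\le \tfrac{1}{q(\tau-1)}\norm{x_{\aast/q}-\xast}{}\le \tfrac{1}{q(\tau-1)}\norm{\xa-\xast}{}$, the last step again by monotonicity since $\aast/q\le\alpha$.

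When $\alpha<\aast$, the non-increase of $\delta(t)/\sqrt{t}$ yields $\delta(\aast)/\aast\le\delta(\alpha)/\alpha$, settling the noise term. For the bias I would use $\norm{x_{\aast}-\xast}{}\le\norm{x_{\aast}-\xa}{}+\norm{\xa-\xast}{}$ and control the increment through Corollary~\ref{cor:xb-xa-bound} with $\beta=\aast$ — precisely where Assumption~\ref{ass:KN} enters — followed by the stopping estimate $\norm{\saa(\opA)\raa(\opA)\opA(\xast-\xn)}{}\le\gamma_0\delta(\aast)$ of Lemma~\ref{lem:case2}; together with $\delta(\aast)/\sqrt{\alpha\aast}\le\delta(\alpha)/\alpha$ this gives $\norm{x_{\aast}-\xa}{}\le c\,\delta(\alpha)/\alpha$. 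Collecting all cases shows $\norm{x_{\aast}^{\delta}-\xast}{}\le C\Phi(\alpha)$ for each admissible $\alpha$, and taking the infimum over $0<\alpha\le\alpha_0$ proves the theorem. The delicate point, and the place where the structure of the rule is essential, is the noise term $\delta(\aast)/\aast$ when the oracle favours a much larger $\alpha$: there $\delta(\cdot)/\cdot$ runs the wrong way, and one must instead exploit that the RG-rule did not stop above $\aast$ (via Lemma~\ref{lem:case1}) to force the competing bias $\norm{\xa-\xast}{}$ to be large; dually, for small $\alpha$ the increment $\norm{x_{\aast}-\xa}{}$ can be tamed only because Assumption~\ref{ass:KN}, through Corollary~\ref{cor:xb-xa-bound}, rules out abnormal spectral behaviour of $\xast-\xn$.
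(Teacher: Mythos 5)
Your proposal is correct and follows essentially the same route as the paper's proof: the same decomposition into bias plus propagated noise with the bound $\delta\norm{g_{\aast}(\opA)\zeta}{}\le c_*\delta(\aast)/\aast$, the same case split around $\aast$, and the same three ingredients (Lemma~\ref{lem:case1} for large competing $\alpha$, Lemma~\ref{lem:case2} together with Corollary~\ref{cor:xb-xa-bound} for small $\alpha$). The only difference is organizational: you treat arbitrary $\alpha\in(0,\alpha_0]$ directly, absorbing the grid-to-continuum comparison into the sub-case $\aast\le\alpha\le\aast/q$, whereas the paper first proves the oracle bound over $\Delta_q$ and then extends it by a separate infimum-comparison argument.
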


\begin{proof}
We first derive some preparatory results. Observing that
$x^\dag-x_\a =r_\a(A) (x^\dag-x_0)$, we have from (\ref{eq:ra-monotone}) that
\begin{equation}\label{eq:bais-monotone}
\|x^\dag-x_\a\|\le \|x^\dag-x_\beta\|, \qquad \forall 0<\a\le \beta.
\end{equation}
By the conditions on $g_\a$ we have
$$
\frac{\ga(t)}{\sa^{1/2}(t)} =\frac{1}{\sqrt{\a}} \sqrt{\ga(t)} \sqrt{\ga(t) (\alpha+t)}
\le \frac{\sqrt{\gamma_*(1+\gamma_*)}}{\alpha}.
$$
Therefore, with $c_*=\sqrt{\gamma_*(1+\gamma_*)}$ we have
\begin{align}
  \norm{\xast - \xad}{} &  \leq \norm{\xast - \xa}{} +
  \norm{\ga(\opA)\lr{\frac 1
      {\sa^{1/2}}}(\opA)\brac{\sa^{1/2}(\opA)(z -
      \zd)}}{}\notag\\
& \leq  \norm{\xast - \xa}{} +
\frac{c_*}{\alpha}
\norm{\sa^{1/2}(\opA) \zeta}{} \delta.  \label{eq:point-wise-bound}
\end{align}
It then follows from Assumption~\ref{ass:noise} that
\begin{align}
\|x^\dag-x_\a^\delta\| \leq  \norm{\xast - \xa}{} +c_* \frac{\delta(\alpha)}{\alpha}.\label{eq:point-error}
\end{align}

Next we will prove the oracle inequality in two steps. We first restrict the oracle
bound to $\alpha\in\Delta_{q}$, and we show that
\begin{equation}
   \label{eq:oracle1}
    \norm{x_{\alpha_{\ast}}^{\delta} - \xast}{} \leq C
    \inf_{\a\in \Delta_q} \set{\norm{\xa - \xast}{} +
      \frac{\delta(\alpha)}{\alpha}}.
\end{equation}
In this case we shall distinguish the cases $\alpha > \aast$ and $\alpha \leq
\aast$, respectively.

\begin{description}

\item[Case $\alpha > \aast$] We first have from (\ref{eq:point-error}), (\ref{eq:bais-monotone})
and the monotonicity of $\a\to \delta(\a)$ that
\begin{align*}
\|x_{\a_*}^\delta -x^\dag\|  \le \|x_{\a_*}-x^\dag\| + c_* \frac{\delta(\a_*)}{\a_*} \le \|x_\a-x^\dag\| +c_* \frac{\delta(\a_*/q)}{\a_*}.
\end{align*}
Since $\a, \a_*\in \Delta_q$, we have $\a_*/q\in \Delta_q$ and $\a\ge \a_*/q>\a_*$. Then we can conclude,
by using Lemma \ref{lem:case1} and (\ref{eq:bais-monotone}), that
\begin{align*}
\|x_{\a_*}^\delta - x^\dag\| \le  \|x_\a - x^\dag\| + \frac{c_*}{q(\tau-1)} \|x_{\a_*/q}  - x^\dag\| \leq \lr{1 + \frac{c_*}{q(\tau -1)}} \|x_\a - x^\dag\|.
\end{align*}

\item[Case $\alpha \leq  \aast$]
We actually use Assumption~\ref{ass:KN} and its consequences.
 Based on Corollary \ref{cor:xb-xa-bound} and Lemmas \ref{lem:case2} we
 conclude in this case that there is a constant $C<\infty$ with
\begin{align*}
 \|x_{\a_*} - x^\dag\| & \leq \|x_\a - x^\dag\|
+ \frac{C}{\sqrt{\a\a_*}} \|s_{\a_*} (A) r_{\a_*} (A)(A \xn - z)\|\\
& \leq \|x_\a-x^\dag\|  + C \gamma_0 \frac{\delta(\a_*)}{\sqrt{\a\a_*}}.
\end{align*}
Consequently, we deduce, using the bound~(\ref{eq:point-error}) and
that $\a\to \delta(\a)/\sqrt{\a}$ is non-increasing, that
\begin{align*}
\|x_{\a_*}^{\delta} - \xast\| & \leq   \|x_{\a_*} - \xast\| + c_* \frac{\delta(\a_*)}{\a_*}
 \leq  \|\xa - \xast\| + C \gamma_0 \frac{\delta(\a_*)}{\sqrt{\a\a_*}} +
c_* \frac{\delta(\a_*)}{\a_*} \\
& \leq \left(C\gamma_0 +c_*\right)\lr{\|\xa - \xast\| + \frac{\delta(\a)}{\a} }.
\end{align*}
\end{description}

Finally, we show the oracle inequality in its full generality.  To this end, let
$0<\a\le \a_0$ be any number. Then there is $j\in\nat$ such that $\a_j < \a \le \a_j/q$.
By using (\ref{eq:bais-monotone}), the fact that $\a\to \delta(\alpha)$ is increasing, and the
fact that $\alpha\to \delta(\alpha)/\alpha$ is decreasing, we obtain
\begin{align*}
\|x_\alpha-x^\dag\| + \frac{\delta(\alpha)}{\alpha}
&\ge \|x_{\alpha_j}-x^\dag\| +\frac{\delta(\alpha_j/q)}{\alpha_j/q}
\ge  q\left(\|x_{\alpha_j}-x^\dag\| +\frac{\delta(\alpha_j)}{\alpha_j} \right)\\
& \ge q \inf_{\beta\in \Delta_q} \left\{ \|x_\beta-x^\dag\| + \frac{\delta(\beta)}{\beta} \right\}.
\end{align*}
Since $0<\a\le \a_0$ is arbitrary, we obtain
$$
\inf_{0<\a\le \a_0} \left\{\|x_\a-x^\dag\| +\frac{\delta(\a)}{\a}\right\}
\ge q \inf_{\a\in \Delta_q} \left\{ \|x_\a-x^\dag\| +\frac{\delta(\a)}{\a}\right\}.
$$
The proof is therefore complete.
\end{proof}

\subsection{Discussion}
\label{sec:discussion}
(a) From Lemma \ref{lem:case1} and (\ref{eq:bais-monotone}) it follows that
$$
\frac{\delta(\a_*/q)}{\a_*/q}\le \frac{1}{\tau-1} \|x_{\a_*/q}-x^\dag\| \le \frac{1}{\tau-1}\|x_0-x^\dag\|.
$$
Since $\a\to \delta(\a)$ is non-decreasing, we obtain
$$
\frac{\delta(\a_*)}{\a_*}\le \frac{q}{\tau-1} \|x_0-x^\dag\|.
$$
If in the definition of $\hat{\a}$ we take $0<\eta<\frac{\tau-1}{q\|x_0-x^\dag\|}$, then we always have
$\a_*>\hat{\a}$. Therefore, the RG rule in Definition \ref{def:RG} simply reduces to the form: {\it $\a_*$ is the
largest parameter in $\Delta_q$ such that
$$
\|s_{\a_*}(A) (A x_{\a_*}^\delta-z^\delta) \| \le \tau \delta(\a_*).
$$
}
The oracle inequality in Theorem \ref{thm:oracle-det} still holds for this simplified parameter choice rule.

(b) The oracle inequality established in Theorem \ref{thm:oracle-det} can be used to
yield error bounds when the solution $\xast$ has smoothness given in
terms of \emph{general source conditions}, i.e.,\ if $\xast-x_0$ belongs
to some source set introduced in Definition \ref{de:gensource}. To see this, we assume
that the regularization has qualification $\psi$ as in Definition \ref{de:quali}
and $x^\dag-x_0\in H_\psi$. We also assume, as introduced in Remark \ref{rem:general-noise},
that the noise can be bounded as $\|A^{-\mu}\zeta\|\leq 1$, which results in
$\delta(\alpha) = \delta \alpha^{\mu}$ for $0\leq \mu \leq 1/2$. Then, for the parameter
$\a_*$, determined by the RG rule in Definition \ref{def:RG}, it follows from
Theorem \ref{thm:oracle-det} that
$$
\|x_{\a_*}^\delta-x^\dag\| \le C \inf_{0<\a\le \a_0} \left\{\psi(\a)
+ \frac{\delta}{\alpha^{1-\mu}}\right\}.
$$
Associated to the smoothness $\psi$, let $\Theta_{\mu,\psi}(t) := t^{1-\mu} \psi(t)$,
$t>0$, which is a strictly increasing function. Given $\delta>0$ we assign
$\alpha_{\delta}>0$ such that $\Theta_{\mu,\psi}(\alpha_{\delta})=\delta$. Then we can conclude that
$$
\|x_{\a_*}^\delta-x^\dag\| \le C \left\{\psi(\a_\delta) + \frac{\delta}{\a_\delta^{1-\mu}}\right\}
\le 2C  \psi(\Theta_{\mu,\psi}^{-1}(\delta)),
$$
which was shown to be order optimal for $\xast$ with the above smoothness
in \cite[Theorem 4]{0266-5611-27-3-035016}. Thus, the present results
cover part of the analysis carried out in \cite{0266-5611-27-3-035016}; it extends the stopping
criteria studied there to the RG-rule, and hence this relates
to \cite{M/T2010b}. However, the above approach is limited. First, the case of \emph{small
noise}, i.e.,\ when $-1/2 < \mu \leq 0$ cannot be covered. Secondly,
the oracle inequality is seen to hold only for those solutions~$\xast$ satisfying
Assumption~\ref{ass:KN}.

\section{Proof of the main result}
\label{sec:gwn}

The proof of Theorem \ref{thm:main} will be carried out in several steps,  similar to the
one in the recent studies \cite{B/M2010,Lu/Mathe2012}. Our starting point is the inequality
(\ref{eq:error-bound-major}). Recall that $Z_{\kappa}$ is the set defined by (\ref{eq:ztn}), i.e.
$Z_{\kappa}\subset X$ consists of those realizations of the noise $\zeta$ obeying Assumption~\ref{ass:noise}
along the sequence $\alpha_{0},\dots,\hat{\a}$ with
\begin{equation}
  \label{eq:delta-gwn}
  \delta(\alpha):= (1+\kappa) \frac\delta{\rhon(\alpha)}
  ,\quad \alpha>0,
\end{equation}
where $\hat{\a}$ is the largest number in $\Delta_q$ satisfying
\begin{equation}\label{alpha-hat}
\Theta_{\rhon}(\hat{\a}) \le \eta (1+\kappa) \delta,
\end{equation}
According to the definition of $\a_{RG}$ we have $\a_{RG}\ge \hat{\a}$.

In order to estimate the first term on the right of (\ref{eq:error-bound-major}) with $\a:=\a_{RG}$,
we observe that when $\zeta\in Z_\kappa$, the parameter $\a_*$ determined by the RG rule in
Definition \ref{def:RG} with $\delta(\a)$ given by (\ref{eq:delta-gwn}) is equal to the parameter $\a_{RG}$
determined by the statistical RG rule in Definition \ref{de:VRG}. Therefore we may use Theorem \ref{thm:oracle-det}
to conclude
\begin{equation}\label{oracle-det2}
\sup_{\zeta\in Z_\kappa} \|x^\dag-x_{\a_{RG}}^\delta\| \le C \inf_{0<\a\le \a_0}
\left\{ \|x^\dag-x_\a\| +\frac{(1+\kappa) \delta}{\Theta_{\rhon}(\a)}\right\}.
\end{equation}

In the following we will estimate the second term on the right side of (\ref{eq:error-bound-major}) with $\a=\a_{RG}$.
We need some auxiliary results.

\begin{lem}\label{lem:number-steps}
Let Assumptions \ref{ass:power-type} hold. Let $\hat{\a}=\alpha_{0}q^{\hat{n}} \in\Delta_{q}$
be the largest parameter satisfying (\ref{alpha-hat}). Then there is a constant $C$ such that
$$
\hat{n} \leq C\left(1+|\log(1/\delta)|\right).
$$
\end{lem}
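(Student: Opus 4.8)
The plan is to estimate $\hat{n}$ by unfolding the defining inequality~(\ref{alpha-hat}) for $\hat{\a}=\alpha_0 q^{\hat{n}}$ and tracking how $\Theta_{\rhon}$ behaves as $\alpha\to 0$. Since $\hat{\a}$ is the \emph{largest} element of $\Delta_q$ satisfying $\Theta_{\rhon}(\hat{\a})\le \eta(1+\kappa)\delta$, the next larger candidate $\hat{\a}/q = \alpha_0 q^{\hat{n}-1}$ must \emph{violate} it, giving the two-sided control
\begin{equation*}
\Theta_{\rhon}(\alpha_0 q^{\hat{n}}) \le \eta(1+\kappa)\delta < \Theta_{\rhon}(\alpha_0 q^{\hat{n}-1}).
\end{equation*}
The strategy is to produce a lower bound for $\Theta_{\rhon}(\alpha)$ of the form $\Theta_{\rhon}(\alpha)\ge c\,\alpha^{p}$ for some exponent $p>0$ valid on $(0,\alpha_0]$, so that the left inequality forces $\alpha_0 q^{\hat{n}}$ (hence $q^{\hat n}$) to be bounded below in terms of $\delta$, which after taking logarithms yields a bound $\hat{n}\le C(1+|\log(1/\delta)|)$.

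First I would recall that $\Theta_{\rhon}(t)=t\rhon(t)=t/\sqrt{t\cN(t)}=\sqrt{t/\cN(t)}=\sqrt{t}/\sqrt{\cN(t)}$. The key monotonicity is that $\cN(\lambda)=\tr{(\opA+\lambda I)^{-1}\opA}$ is non-increasing in $\lambda$ and, crucially, bounded above: under Assumption~\ref{ass:power-type} one has $\cN(\lambda)\le \lambda^{-1}\tr A$ for all $\lambda>0$, since $(\opA+\lambda I)^{-1}\opA \preceq \lambda^{-1}\opA$. Substituting this gives the clean lower bound
\begin{equation*}
\Theta_{\rhon}(t)=\sqrt{\frac{t}{\cN(t)}}\ge \sqrt{\frac{t\cdot t}{\tr A}}=\frac{t}{\sqrt{\tr A}},\qquad t>0,
\end{equation*}
so $\Theta_{\rhon}(t)\ge t/\sqrt{\tr A}$ grows at least linearly. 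This is exactly the lower bound of the required type with $p=1$.

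Combining this with the defining inequality, $\alpha_0 q^{\hat{n}}/\sqrt{\tr A}\le \Theta_{\rhon}(\alpha_0 q^{\hat n})\le \eta(1+\kappa)\delta$, so that $q^{\hat n}\le \sqrt{\tr A}\,\eta(1+\kappa)\delta/\alpha_0$. Taking logarithms (recall $0<q<1$, so $\log(1/q)>0$) gives
\begin{equation*}
\hat{n}\,\log(1/q)\ge \log\!\lr{\frac{\alpha_0}{\sqrt{\tr A}\,\eta(1+\kappa)\delta}} = |\log(1/\delta)| + \log\!\lr{\frac{\alpha_0}{\sqrt{\tr A}\,\eta(1+\kappa)}}.
\end{equation*}
This is the wrong direction for an \emph{upper} bound on $\hat n$, so the bound must instead be extracted from the strict inequality $\eta(1+\kappa)\delta<\Theta_{\rhon}(\alpha_0 q^{\hat n-1})$ together with an \emph{upper} bound on $\Theta_{\rhon}$; since $\Theta_{\rhon}$ is increasing and $\Theta_{\rhon}(\alpha_0 q^{\hat n-1})\le \Theta_{\rhon}(\alpha_0)$ is bounded by a constant, one gets $\eta(1+\kappa)\delta$ bounded above by a constant, which is uninformative, so the linear \emph{upper} bound $\Theta_{\rhon}(t)\le C t^{p'}$ with $p'>0$ on $(0,\alpha_0]$ must be the operative estimate. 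The hard part will be establishing such an upper bound $\Theta_{\rhon}(t)\ge c\, t$ reinterpreted correctly: one needs $\hat{\a}=\alpha_0 q^{\hat n}$ bounded \emph{below} by a power of $\delta$, which follows from $\Theta_{\rhon}(\hat\a)\ge \hat\a/\sqrt{\tr A}$ combined with a lower bound $\Theta_{\rhon}(\hat\a/q)>\eta(1+\kappa)\delta$ and continuity, forcing $\hat\a\ge c\,\delta$ up to the $\Theta_{\rhon}$ modulus; then $\alpha_0 q^{\hat n}\ge c\,\delta$ gives $q^{\hat n}\ge c\delta/\alpha_0$, and taking logarithms yields $\hat n\le C(1+|\log(1/\delta)|)$ as claimed. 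The main obstacle is therefore getting the inequality oriented correctly — using the linear lower bound on $\Theta_{\rhon}$ applied to $\hat\a$ (not $\hat\a/q$) to bound $\hat\a$ below by a multiple of $\delta$, after which the logarithm converts the geometric spacing into the stated logarithmic count.
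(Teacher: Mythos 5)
There is a genuine gap, and it sits exactly where you flagged it yourself. Your structural setup is correct: since $\Theta_{\rhon}$ is strictly increasing, maximality of $\hat{\a}$ gives $\Theta_{\rhon}(\hat{\a}/q)>\eta(1+\kappa)\delta\ge\eta\delta$, and to extract an upper bound on $\hat{n}$ you must convert this into a lower bound $\hat{\a}\ge c\,\delta^{p}$, which requires an \emph{upper} bound $\Theta_{\rhon}(t)\le C t^{p'}$ on $(0,\a_0]$, equivalently a \emph{lower} bound on $\cN(t)$. But the only estimate you actually prove, $\cN(\lambda)\le \tr{A}/\lambda$ and hence $\Theta_{\rhon}(t)\ge t/\sqrt{\tr{A}}$, points in the opposite direction; as you observe, feeding it into the defining inequality only bounds $\hat{n}$ from below. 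Your attempted repair in the final paragraph — combining the lower bound $\Theta_{\rhon}(\hat{\a})\ge\hat{\a}/\sqrt{\tr{A}}$ with the violated inequality ``and continuity'' to force $\hat{\a}\ge c\,\delta$ — does not work: both facts are lower bounds on $\Theta_{\rhon}$, and no combination of them constrains $\hat{\a}$ from below. You correctly state that an upper bound on $\Theta_{\rhon}$ ``must be the operative estimate,'' but you never establish one, so the proof does not close.

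The missing ingredient is elementary, and it is precisely what the paper uses: keep a single term of the trace. Since $\|A\|$ is the largest eigenvalue of $A$,
$$
\cN(\a)=\tr{(\a I+A)^{-1}A}\ge \frac{\|A\|}{\a+\|A\|}\ge\frac{\|A\|}{\a_0+\|A\|},
\qquad 0<\a\le\a_0
$$
(equivalently: $\cN$ is non-increasing, so $\cN(\a)\ge\cN(\a_0)>0$ on this range). This yields
$\Theta_{\rhon}(t)=\sqrt{t/\cN(t)}\le C_0\sqrt{t}$ with $C_0=\sqrt{(\a_0+\|A\|)/\|A\|}$, which is the needed upper bound (with exponent $p'=1/2$, not $1$). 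Then the violated inequality gives $C_0\sqrt{\hat{\a}/q}\ge\Theta_{\rhon}(\hat{\a}/q)>\eta\delta$, hence $\a_0 q^{\hat{n}-1}\ge(\eta\delta/C_0)^{2}$, and taking logarithms gives $\hat{n}\le C\lr{1+|\log(1/\delta)|}$. Note also that the correct lower bound on $\hat{\a}$ obtained this way is of order $\delta^{2}$, not $\delta$ as you claimed; this discrepancy is harmless for the logarithmic count, but it is another sign that the quantitative step was never actually carried out. With the displayed estimate inserted, your argument closes and becomes the paper's proof.
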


\begin{proof}
Since $\|A\|>0$ is the first eigenvalue of $A$, it follows from the definition
of ${\mathcal N}(\a)$ that
$$
{\mathcal N}(\a) = \tr{(\a I +A)^{-1} A} \ge \frac{\|A\|}{\a+\|A\|} \ge \frac{\|A\|}{\a_0+\|A\|},
\quad 0<\a\le \a_0.
$$
Therefore, with $C_0: =\sqrt{(\a_0+\|A\|)/\|A\|}$, we obtain
$$
 \Theta_{\rhon} (\a) =\sqrt{\frac{\a}{\cN(\a)}} \le C_0 \a^{1/2}, \quad 0<\a\le \a_0.
$$
According to the definition of $\hat{\a}$ we have
$$
\Theta_{\rhon}(\hat{\a}/q) >\eta (1+\kappa) \delta \ge \eta \delta.
$$
Consequently $C_0 (\hat{\a}/q)^{1/2} \ge \eta \delta$ which implies the result.
\end{proof}

We shall also use some prerequisites from Gaussian random elements in
Banach spaces, and we recall the following results from \cite[Lemma~3.1
\& Corollary~3..2]{MR1102015}.

\begin{lem}\label{lem:LT3.1}
  Let $\Xi$ be any Gaussian element in some Banach space. Then
$$
\prob{\norm{\Xi}{} > \e{\norm{\Xi}{}} + b} \leq e^{- \frac{b^2}{2
v^2}},
$$
with $v^{2}:= \sup_{\norm{w}{}\leq 1}\e{\scalar{\Xi}{w}}^{2}$. Moreover,
for each $p>1$ there is a constant $C_{p}$ such that
$$
\e{\norm{\Xi}{}^{p}}^{1/p} \leq C_{p} \e{\norm{\Xi}{}}.
$$
\end{lem}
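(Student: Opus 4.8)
The plan is to recognize that the two claimed inequalities are exactly the two standard facts about the concentration of the norm of a Gaussian random element, and to reduce each to the classical statement I would cite from \cite{MR1102015}. The first inequality is the Borell--Tsirelson--Ibragimov--Sudakov concentration bound; the second is a Kahane--Khinchin type equivalence of $L_p$ norms for Gaussian variables. Since the lemma is lifted essentially verbatim from \cite[Lemma~3.1 \& Corollary~3.2]{MR1102015}, the honest plan is to invoke those results directly, and the only real work is to check that the parameter $v^2$ in the present statement matches the weak variance that appears there.

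For the first inequality, I would start from the observation that for a centered Gaussian element $\Xi$ in a separable Banach space, the real-valued functional $\Xi \mapsto \norm{\Xi}{}$ is Lipschitz with respect to the Cameron--Martin structure, and the isoperimetric inequality on Gaussian space yields $\prob{\norm{\Xi}{} > m + b}\le \exp(-b^2/(2v^2))$, where $m$ is a median of $\norm{\Xi}{}$ and $v^2 = \sup_{\norm{w}{}\le 1}\e{\scalar{\Xi}{w}}^2$ is the \emph{weak variance}. The only adjustment needed is to replace the median $m$ by the mean $\e{\norm{\Xi}{}}$; this is legitimate because the same concentration estimate (with the stated $v^2$) holds about the mean, and this is precisely the form recorded in \cite{MR1102015}. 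I would simply cite that reference rather than reprove the isoperimetric step.

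For the second inequality, given $p>1$ I would appeal to the fact that all moments of $\norm{\Xi}{}$ are comparable: there is a constant $C_p$, depending only on $p$, with $\e{\norm{\Xi}{}^p}^{1/p}\le C_p\,\e{\norm{\Xi}{}}$. This follows from the Gaussian concentration just established, since the tail bound $\prob{\norm{\Xi}{}>\e{\norm{\Xi}{}}+b}\le e^{-b^2/(2v^2)}$ forces all centered moments of $\norm{\Xi}{}$ to be controlled by powers of $v$, and a separate comparison shows $v\le \e{\norm{\Xi}{}}$ (the weak variance never exceeds the mean of the norm); integrating the tail then bounds $\e{\norm{\Xi}{}^p}$ by a constant multiple of $\e{\norm{\Xi}{}}^p$. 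Again this is exactly \cite[Corollary~3.2]{MR1102015}, so I would cite it.

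The main (and only) obstacle is bookkeeping rather than mathematics: one must confirm that the weak variance $v^2=\sup_{\norm{w}{}\le 1}\e{\scalar{\Xi}{w}}^2$ as defined here is the quantity controlling the concentration rate in the cited source, and that the cited concentration is stated about the mean (not the median). Since both points are settled in \cite{MR1102015}, the proof reduces to a direct citation, and no further argument is required in the paper itself.
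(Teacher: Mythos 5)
Your proposal coincides with the paper's treatment: the paper gives no proof of this lemma at all, recalling it verbatim from \cite[Lemma~3.1 \& Corollary~3.2]{MR1102015}, which is exactly the citation-plus-parameter-check you propose. One minor correction to your sketch: the weak variance comparison is $v \leq \sqrt{\pi/2}\,\e{\norm{\Xi}{}}$ rather than $v \leq \e{\norm{\Xi}{}}$ (the one-dimensional Gaussian identity $\e{\abs{g}} = \sqrt{2/\pi}\,\lr{\e{g^{2}}}^{1/2}$ costs a constant), but this factor is harmlessly absorbed into $C_{p}$ and does not affect the conclusion.
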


We apply Lemma~\ref{lem:LT3.1} to $\Xi:= \sa^{1/2}(\opA)\zeta \sa^{1/2}(\opA)T^{\ast}\xi$.
For fixed $\alpha\in\Delta_{q}$ we denote
$$
  Z_{\kappa,\alpha} := \set{\zeta: \, \norm{\sa^{1/2}(\opA)\zeta}{}\leq (1+\kappa)
    \frac {1}{\rhon(\alpha)}}.
$$

\begin{cor}\label{cor:zka-bound}
For each $0 < \alpha\leq \alpha_{0}$ there holds
$$
\prob{Z_{\kappa,\alpha}^c} \leq e^{- \frac{\kappa^{2}\cN(\alpha)} 2}
\qquad \mbox{and} \qquad
  \lr{\e{\norm{\sa^{1/2}(\opA) \zeta}{}^{4}}}^{1/4} \leq C_{4}  \frac{1}{\rhon(\alpha)}.
$$
\end{cor}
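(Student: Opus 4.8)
The plan is to apply the two parts of Lemma~\ref{lem:LT3.1} to the Gaussian element $\Xi := \sa^{1/2}(\opA)\zeta = \sa^{1/2}(\opA)T^{\ast}\xi$, so the entire argument reduces to computing (or bounding) the two quantities appearing there: the expectation $\e{\norm{\Xi}{}}$ and the weak variance $v^{2} = \sup_{\norm{w}{}\leq 1}\e{\scalar{\Xi}{w}^{2}}$.

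For the expectation, I would start from identity~(\ref{eq:sa-rhon}), which gives $\e{\norm{\Xi}{}^{2}} = \tr{\sa(\opA)\opA} = 1/\rhon(\alpha)^{2} = \alpha\cN(\alpha)$; Jensen's inequality then yields $\e{\norm{\Xi}{}} \leq \lr{\e{\norm{\Xi}{}^{2}}}^{1/2} = 1/\rhon(\alpha)$. For the weak variance, I would use self-adjointness of $\sa^{1/2}(\opA)$ to write $\scalar{\Xi}{w} = \scalar{\zeta}{\sa^{1/2}(\opA)w}$, and then invoke the covariance structure $\e{\scalar{\zeta}{u}\scalar{\zeta}{u'}} = \scalar{u}{\opA u'}$ of $\zeta = T^{\ast}\xi$ recorded after Assumption~\ref{ass:GWN}. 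This gives $\e{\scalar{\Xi}{w}^{2}} = \scalar{w}{\sa^{1/2}(\opA)\opA\sa^{1/2}(\opA)w} = \scalar{w}{\sa(\opA)\opA w}$, whence $v^{2} = \norm{\sa(\opA)\opA}{}$. Since the spectral symbol $\sa(t)t = \alpha t/(t+\alpha)$ is bounded above by $\alpha$ for all $t\geq 0$, I obtain $v^{2} \leq \alpha$.

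With these two ingredients the first inequality follows by a direct substitution. Because $\e{\norm{\Xi}{}}\leq 1/\rhon(\alpha)$, the event $Z_{\kappa,\alpha}^{c} = \set{\norm{\Xi}{} > (1+\kappa)/\rhon(\alpha)}$ is contained in $\set{\norm{\Xi}{} > \e{\norm{\Xi}{}} + b}$ with $b := \kappa/\rhon(\alpha)$. Applying the tail bound of Lemma~\ref{lem:LT3.1} and using $v^{2}\leq \alpha$ together with $1/\rhon(\alpha)^{2} = \alpha\cN(\alpha)$ gives
$$
\prob{Z_{\kappa,\alpha}^{c}} \leq e^{-\frac{b^{2}}{2v^{2}}} = e^{-\frac{\kappa^{2}}{2v^{2}\rhon(\alpha)^{2}}} \leq e^{-\frac{\kappa^{2}\alpha\cN(\alpha)}{2\alpha}} = e^{-\frac{\kappa^{2}\cN(\alpha)}{2}},
$$
as claimed. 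The second inequality is then immediate from the moment part of Lemma~\ref{lem:LT3.1} with $p=4$, which yields $\lr{\e{\norm{\Xi}{}^{4}}}^{1/4} \leq C_{4}\e{\norm{\Xi}{}} \leq C_{4}/\rhon(\alpha)$.

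I do not anticipate a genuine obstacle here; the only point requiring care is the computation of $v^{2}$, where one must correctly transfer the weight $\sa^{1/2}(\opA)$ onto the test vector $w$ and apply the covariance of $\zeta$ (rather than that of the white noise $\xi$ directly). The bound $\norm{\sa(\opA)\opA}{}\leq\alpha$ is exactly what couples the variance scale to the scale $1/\rhon(\alpha)$ in the exponent, producing the clean factor $\cN(\alpha)$.
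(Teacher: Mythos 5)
Your proposal is correct and follows essentially the same route as the paper: both apply Lemma~\ref{lem:LT3.1} to $\Xi=\sa^{1/2}(\opA)\zeta$, bound $\e{\norm{\Xi}{}}$ by $1/\rhon(\alpha)$ via~(\ref{eq:sa-rhon}), and bound the weak variance by $\alpha$, differing only cosmetically in that you compute $v^{2}$ through the covariance of $\zeta$ while the paper passes through $\xi$ directly via $\e{\scalar{\Xi}{w}^{2}}=\norm{T\sa^{1/2}(\opA)w}{}^{2}$. Your explicit remark that $Z_{\kappa,\alpha}^{c}\subset\set{\norm{\Xi}{}>\e{\norm{\Xi}{}}+\kappa/\rhon(\alpha)}$ is a small point the paper leaves implicit, and it is exactly the right justification.
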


\begin{proof}
We first estimate ${\mathbb P}[Z_{\kappa,\a}^c]$.  The expected norm
of $\Xi$ can be bounded, cf.~(\ref{eq:sa-rhon}), as
\begin{align}
  \e{\norm{ \sa^{1/2}(\opA)\zeta}{}}
\leq  \lr{\e{\norm{\sa^{1/2}(\opA)\zeta}{}^{2}}}^{1/2} = \frac{1}{\rhon(\alpha)} \label{eq:enorm-bound}.
\end{align}
For any $w\in X$ with $\|w\|\le 1$, the weak second moments can be bounded from above by
$$
\e{\scalar{\Xi}{w}}^{2} =\e{\scalar{\xi}{ T \sa^{1/2}(\opA)w}}^{2}
= \norm{ T \sa^{1/2}(\opA)w}{}^{2} \leq \norm{ T
  \sa^{1/2}(\opA)}{}^{2} \leq \alpha.
$$
Thus we may apply Lemma \ref{lem:LT3.1} with $b:= \kappa
/\rhon(\a)$ to conclude that
$$
\prob{Z_{\kappa,\alpha}^c} \leq e^{- \frac{\kappa^{2}}{2\alpha
    \rhon^{2}(\alpha)}} = e^{- \frac{\kappa^{2}\cN(\alpha)}{2}},
$$
which completes the proof of the first assertion. The second one is a
consequence of~(\ref{eq:enorm-bound}) and Lemma~\ref{lem:LT3.1}.
\end{proof}

Finally we turn to the proof of the main result.

{\it Proof of Theorem \ref{thm:main}.}
We will use (\ref{eq:error-bound-major}) with $\a=\a_{RG}$. The first
term on the right has been estimated
in (\ref{oracle-det2}). By using Lemma \ref{lem:number-steps} and  Corollary \ref{cor:zka-bound}
we obtain from $Z_\kappa^c =\bigcup_{\hat{\a}\le \a\in \Delta_q} Z_{\kappa,\a}^c$ that
 \begin{align*}
 \prob{Z_{\kappa}^{c}}  \leq  (\hat{n}+1) \sup_{ \hat{\a}\leq \a \in\Delta_{q}} \prob{Z_{\kappa,\alpha}^{c}}
 \leq C\left(1+|\log(1/\delta)|\right) e^{- \frac{\kappa^{2}\cN(\alpha_{0})}{2}}.
 \end{align*}
For~$\kappa = \sqrt{8\abs{\log(1/\delta)}/\cN(\alpha_{0})}$ this yields
\begin{equation}\label{prob-z-c}
\prob{Z_{\kappa}^{c}} \le C\left(1+|\log(1/\delta)|\right) \delta^{4}
\le C \left(1+\sqrt{|\log(1/\delta)|}\right)^4 \delta^4.
\end{equation}

It remains to establish a bound for
$\e{\norm{\xast-x_{\a_{RG}}^{\delta}}{}^{4}}$. We emphasize that the
random element~$x_{\a_{RG}}^{\delta}$ is no longer Gaussian in general,
since the parameter $\alpha_{RG}$ depends on the
data~$\zeta$. Hence we cannot apply Lemma~\ref{lem:LT3.1} directly.
Therefore we will use the error bound~(\ref{eq:point-wise-bound}) which is
valid for every $\zeta$. By using the facts that $\a_{RG}\ge\hat{\a}$
and that the function $\alpha \mapsto \sa^{1/2}(t)/\alpha$ is
decreasing for each $t\ge 0$, we obtain
\begin{align*}
\|x^\dag-x_{\a_{RG}}^\delta\|
& \leq \|x^\dag-x_{\a_{RG}}\| + c_{\ast} \delta
\frac{\|s_{\a_{RG}}^{1/2}(A)\zeta\|}{\a_{RG}} \le \|x^\dag-x_0\| + c_*
\delta \frac{\|s_{\hat{\a}}^{1/2} (A) \zeta\|}{\hat{\a}}.
\end{align*}
Since $\hat\alpha$ is deterministic, the element~$s_{\hat{\a}}^{1/2}
(A) \zeta$ is Gaussian. Thus we may use the bound on the fourth moment
of $\|s_{\hat{\a}}^{1/2}(A)\zeta\|$ given in Corollary~\ref{cor:zka-bound} to obtain
$$
\lr{\e{\norm{x -x_{\a_{RG}}^{\delta}}{}^{4}}}^{1/4} \leq  \norm{\xast -
  x_{0}}{} + c_{\ast}C_{4}\frac{\delta}{\Theta_{\rhon}(\hat{\a})}.
$$
Since the function $\a\to \rhon(\a)$ is decreasing, it is easy to obtain that $\Theta_{\rhon}(\hat{\a})
\ge q \Theta_{\rhon}(\hat{\a}/q)$. By the definition of $\hat{\a}$ we then obtain
$\Theta_{\rhon}(\hat{\a}) \ge q \eta(1+\kappa) \delta \ge q\eta \delta$. Consequently
\begin{equation}\label{fourth-moment}
\lr{\e{\norm{x -x_{\a_{RG}}^{\delta}}{}^{4}}}^{1/4} \leq \|x^\dag-x_0\| + \frac{c_* C_4}{q \eta}.
\end{equation}
Combining the estimates (\ref{oracle-det2}), (\ref{prob-z-c}) and (\ref{fourth-moment}) with
(\ref{eq:error-bound-major}) and we use that $0<\a\le \a_0$ yields $\Theta_{\rhon} (\a_0)/\Theta_{\rhon}(\a)\ge 1$. We can conclude that
$$
\lr{\e{\norm{\xast - x_{\a_{RG}}^{\delta}}{}^{2}}}^{1/2} \leq C
\inf_{0<\alpha \leq \a_0}
\set{\norm{\xa - \xast}{} +
  \frac{\delta(1+\kappa)}{\Theta_{\rhon}(\alpha)}}.
$$
The proof is therefore complete. $\Box$

\def\cprime{$'$}

\end{document}